\newtheorem{thm}{Theorem}[section]
\newtheorem{lem}[thm]{Lemma}
\theoremstyle{definition}
\renewcommand{\qedsymbol}{$\blacksquare$}
\newcommand\ULobstacle{}
\def\ULobstacle[#1](#2:#3){
    \draw[draw=#1, thick, fill=#1] (#2-0.45, #3+0.45) -- (#2+0.45, #3-0.45);
}
\newcommand\URobstacle{}
\def\URobstacle[#1](#2:#3){
    \draw[draw=#1, thick, fill=#1] (#2+0.45, #3+0.45) -- (#2-0.45, #3-0.45);
}
\begin{document}
\reversemarginpar

\title{The Manhattan and Lorentz Mirror Models - A result on the Cylinder with low density of mirrors}
\author{Kieran Ryan \footnote{Supported in part by the European Research Council starting grant 639305 (SPECTRUM)}}
\affil{\href{mailto:kieran.ryan@qmul.ac.uk}{kieran.ryan@qmul.ac.uk}\\
Department of Mathematics, Queen Mary University of London, Mile End Road, London E1 4NS}
            
\date{}
\maketitle

\begin{abstract}
    We study the Manhattan and Lorentz Mirror models on an infinite cylinder of finite even width $n$, with the mirror probability $p$ satisfying $p<Cn^{-1}$, $C$ a constant. We use the Brauer and Walled Brauer algebras to show that the maximum height along the cylinder reached by a walker is order $p^{-2}$.
\end{abstract}

\section{Introduction}
The Manhattan and Lorentz mirror models \cite{cardymanhattan03}, \cite{kozma2013lower}, are two very similar models, each describing a random walk on the $\mathbb{Z}^2$ lattice. Let $0 \le p \le 1$. The walker is a particle of light which bounces off mirrors placed at each vertex at $45^{\circ}$, independently with probability $p$. For the Lorentz mirror model, the orientation of the mirror (NW or NE) is chosen independently with probability $\frac{1}{2}$. For the Manhattan model, the lattice is a priori given Manhattan directions (see Figure \ref{fig:1and2:modelexamples}), and the orientation of the mirror is determined by its location (ie. a NW mirror if the sum of the point's coordinates is odd, and NE if the sum is even), so that the walker always follows the directions of the lattice. The main questions of interest in both models are whether the paths remain bounded or not, and the nature of the motion of the walker.

We study the models on an infinite cylinder $\mathbb{Z} \times C_{n}$ of finite \textit{even} width $n$. Note that on the cylinder, paths are bounded with probability 1 - indeed, there is a positive probability that a horizontal row is filled with mirrors such that no path can pass the row. We are interested in how the length of the paths vary with $p$. We are not aware of any results of this kind which are not exponential in $p^{-1}$. The result of this paper, Theorem \ref{thm:1.1mainthm}, shows that for both models, when $p \le Cn^{-1}$, $C$ a constant, the maximal vertical distance travelled by a path on the $n$-cylinder is order $p^{-2}$. We wonder whether this is true for all $p$.

We observe an underlying algebraic structure (valid for any value of $p$), that the models on the cylinder can be thought of as Markov chains on the Brauer algebra (in the mirror case), or its subalgebra the Walled Brauer algebra (in the Manhattan case). We suggest that the models' association with different algebraic structures reflects their different behaviours. A third model, on the L-lattice (see \cite{cardyquantumclasslocal02}) is solved using percolation, but can be similarly thought of as a Markov chain on the (extended) Temperley Lieb algebra.\\

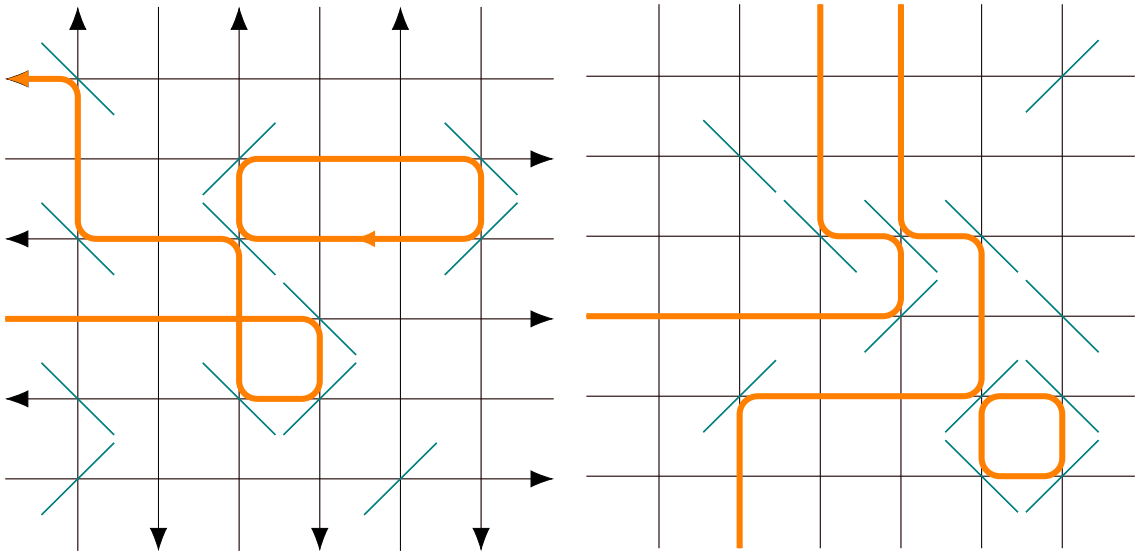
\begin{figure}[h]
    
\centering
\resizebox{1.0\textwidth}{!}{%
\begin{tikzpicture}[scale=1.4, every node/.style={transform shape}]
    
    \draw[step=1cm,pink,very thin] (-0.9,-0.9) grid (5.9,5.9);
    \draw[-{Latex[length=0.4cm]}] (1,5.9) -- (1,-0.9);
    \draw[-{Latex[length=0.4cm]}] (3,5.9) -- (3,-0.9);
    \draw[-{Latex[length=0.4cm]}] (5,5.9) -- (5,-0.9);
    \draw[-{Latex[length=0.4cm]}] (0,-0.9) -- (0,5.9);
    \draw[-{Latex[length=0.4cm]}] (2,-0.9) -- (2,5.9);
    \draw[-{Latex[length=0.4cm]}] (4,-0.9) -- (4,5.9);
    \draw[-{Latex[length=0.4cm]}] (-0.9,4) -- (5.9,4);
    \draw[-{Latex[length=0.4cm]}] (-0.9,2) -- (5.9,2);
    \draw[-{Latex[length=0.4cm]}] (-0.9,0) -- (5.9,0);
    \draw[-{Latex[length=0.4cm]}] (5.9,1) -- (-0.9,1);
    \draw[-{Latex[length=0.4cm]}] (5.9,3) -- (-0.9,3);
    \draw[-{Latex[length=0.4cm]}] (5.9,5) -- (-0.9,5);
    
    \URobstacle[teal](5:3);
    \URobstacle[teal](2:4);
    \URobstacle[teal](0:0);
    \ULobstacle[teal](0:5);
    \URobstacle[teal](3:1);
    \ULobstacle[teal](2:3);
    \ULobstacle[teal](0:1);
    \ULobstacle[teal](3:2);
    \ULobstacle[teal](2:1);
    \URobstacle[teal](4:0);
    \ULobstacle[teal](0:3);
    \ULobstacle[teal](5:4);

    \draw[-{Latex[length=0.4cm]}, rounded corners=9pt, line width=3pt, orange](-0.9, 2) -- (3, 2) -- (3, 1) -- (2, 1) -- (2, 3) -- (0, 3) -- (0, 5) -- (-0.9, 5);
    \draw[-{Latex[length=0.4cm]}, rounded corners=9pt, line width=3pt, orange](3.6, 3) -- (2, 3) -- (2, 4) -- (5, 4) -- (5, 3) -- (3.4, 3);
    
    \end{tikzpicture}
    \quad
    \begin{tikzpicture}[scale=1.4, every node/.style={transform shape}]
    
    \draw[step=1cm,pink,very thin] (-0.9,-0.9) grid (5.9,5.9);
    \draw[-] (1,5.9) -- (1,-0.9);
    \draw[-] (3,5.9) -- (3,-0.9);
    \draw[-] (5,5.9) -- (5,-0.9);
    \draw[-] (0,-0.9) -- (0,5.9);
    \draw[-] (2,-0.9) -- (2,5.9);
    \draw[-] (4,-0.9) -- (4,5.9);
    \draw[-] (-0.9,4) -- (5.9,4);
    \draw[-] (-0.9,2) -- (5.9,2);
    \draw[-] (-0.9,0) -- (5.9,0);
    \draw[-] (5.9,1) -- (-0.9,1);
    \draw[-] (5.9,3) -- (-0.9,3);
    \draw[-] (5.9,5) -- (-0.9,5);
    
    \ULobstacle[teal](1:4);
    \URobstacle[teal](5:5);
    \ULobstacle[teal](2:3);
    \ULobstacle[teal](3:3);
    \ULobstacle[teal](4:3);
    \URobstacle[teal](3:2);
    \ULobstacle[teal](5:2);
    \URobstacle[teal](1:1);
    \URobstacle[teal](4:1);
    \ULobstacle[teal](4:0);
    \ULobstacle[teal](5:1);
    \URobstacle[teal](5:0);
    
    \draw[-, rounded corners=9pt, line width=3pt, orange](2,5.9) -- (2,3) -- (3,3) -- (3,2) -- (-0.9, 2);
    \draw[-, rounded corners=9pt, line width=3pt, orange](3,5.9) -- (3,3) -- (4,3) -- (4,1) -- (1,1) -- (1, -0.9);
    \draw[-, rounded corners=9pt, line width=3pt, orange](4.5,0) -- (5,0) -- (5,1) -- (4,1) -- (4,0) -- (4.5,0);
    
    \end{tikzpicture}
    
}%
    
    \caption{Examples of the Manhattan model (left) and Mirror model (right), with mirrors in blue, and a few paths highlighted in orange. Note that the orientation of a mirror in the Manhattan case is determined by the Manhattan directions of the lattice.}
    \label{fig:1and2:modelexamples}
\end{figure}

Let us recap the existing results on both models (which are on $\mathbb{Z}^2$, unless specified). The Mirror model was introduced by Ruijgrok and Cohen \cite{ruijgrok&cohen1988} as a lattice version of the Ehrenfest wind-tree model. Grimmett \cite{grimmettbook} proved that on $\mathbb{Z}^2$, if $p=1$, then the walker's path is bounded with probability 1. It is conjectured that this is also true for $0 < p \le 1$. This is supported by numerical simulations, for example, in \cite{ziffkongcohen1991}. More recently, Kozma and Sidoravicius \cite{kozma2013lower} showed that, for any $0 < p \le 1$, the probability the walker reaches the boundary of the $n$-box $[-n,n]^2$ is at least $\frac{1}{2n+1}$. To obtain this result, they study the model on an infinte cylinder of finite \textit{odd} width, where there is deterministically always an infinite path. The Manhattan model cannot be neatly defined on a cylinder of odd width, so this method cannot be applied. 

The Mirror model on the cylinder (under the name the O(1) loop model) has been studied using the Brauer algebra before, in several papers relating to a conjecture (and variations thereof) by Razumov and Stroganov \cite{razumov2001combinatorial}, \cite{zinnjustin2004limdistn}, \cite{francesco2004razumovstroganovsumrule}, which gives the entries of the limiting distribution in terms of combinatorial objects such as alternating sign matrices. A generalised mirror model (the $O(q)$ loop model), where the distribution on configurations is weighted by $q^{\#loops}$, $q \in \mathbb{C}$, is studied in \cite{nienhuis1993weighted}; this is the model on the Brauer algebra with parameter $q$, $\mathbb{B}_{n,q}$. In these papers, the requirement of a Yang-Baxter equation restricts the permissible values of the parameters - in our specific setup, only $p=\frac{8}{9}$ qualifies (see the end of \cite{nienhuis1993weighted}).

The Manhattan model shares features of quantum disordered systems. The model was introduced by Beamond, Cardy and Owczarek \cite{cardyquantumclasslocal02}, in close relation to a quantum network model on the Manhattan lattice. The quantum model has random $Sp(2) = SU(2)$ matrices on each edge of the lattice, and the classical model arises on averaging over this disorder. In most classical models in two dimensions, localisation (bounded paths) is not observed, whereas in the Manhattan (and Mirror) model, it is expected (see below). It is not clear if the mirror model has a similar explicit relationship with a quantum model. For more detail on the connection to quantum models, see Spencer's review \cite{spencer2012duality}. 

An argument from \cite{cardyquantumclasslocal02} for tackling the Manhattan model uses percolation. The placement of the mirrors is exactly a Bernoulli percolation on the edges of $\mathbb{Z}^2$, rotated $45^{\circ}$ and scaled. The walker's path stays within $\frac{1}{\sqrt{2}}$ of its closest dual cluster (see Figure 3 ***). The dual clusters are finite with probability 1 for $p \ge \frac{1}{2}$, so so are the Manhattan paths. 

For $p > \frac{1}{2}$, the probability that two points are in the same dual cluster decays exponentially in the distance, which gives the same for connection by a Manhattan path. This is markedly different from the Mirror model. For $p = \frac{1}{2}$, the dual clusters' connection probability decays slower than $\frac{1}{n}$ in the distance (see, for example, \cite{grimmettchapter}), so we cannot obtain exponential decay in Manhattan connection probabilities (although this may well still be true). For $p < \frac{1}{2}$, this argument is wholly inconclusive, since dual clusters are almost surely infinite. Numerical simulations in \cite{cardymanhattan03} indicate that paths are finite, with exponential decay in connection probabilities. Clearly for $p=0$, the paths escape in straight lines to infinity. \\


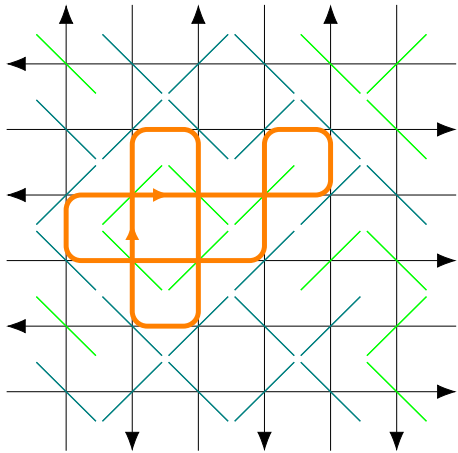
\begin{figure}[h]
    \centering
    \resizebox{0.4\textwidth}{!}{%
    \begin{tikzpicture}[scale=1.4, every node/.style={transform shape}]
    
    \draw[-{Latex[length=0.4cm]}] (1,5.9) -- (1,-0.9);
    \draw[-{Latex[length=0.4cm]}] (3,5.9) -- (3,-0.9);
    \draw[-{Latex[length=0.4cm]}] (5,5.9) -- (5,-0.9);
    \draw[-{Latex[length=0.4cm]}] (0,-0.9) -- (0,5.9);
    \draw[-{Latex[length=0.4cm]}] (2,-0.9) -- (2,5.9);
    \draw[-{Latex[length=0.4cm]}] (4,-0.9) -- (4,5.9);
    \draw[-{Latex[length=0.4cm]}] (-0.9,4) -- (5.9,4);
    \draw[-{Latex[length=0.4cm]}] (-0.9,2) -- (5.9,2);
    \draw[-{Latex[length=0.4cm]}] (-0.9,0) -- (5.9,0);
    \draw[-{Latex[length=0.4cm]}] (5.9,1) -- (-0.9,1);
    \draw[-{Latex[length=0.4cm]}] (5.9,3) -- (-0.9,3);
    \draw[-{Latex[length=0.4cm]}] (5.9,5) -- (-0.9,5);
    
    \ULobstacle[teal](0:0);
    \URobstacle[teal](1:0);
    \ULobstacle[teal](2:0);
    \URobstacle[teal](3:0);
    \ULobstacle[teal](4:0);
    \ULobstacle[teal](1:1);
    \URobstacle[teal](2:1);
    \ULobstacle[teal](3:1);
    \URobstacle[teal](4:1);
    \ULobstacle[teal](0:2);
    \URobstacle[teal](3:2);
    \URobstacle[teal](0:3);
    \URobstacle[teal](4:3);
    \ULobstacle[teal](5:3);
    \ULobstacle[teal](0:4);
    \URobstacle[teal](1:4);
    \ULobstacle[teal](2:4);
    \URobstacle[teal](3:4);
    \ULobstacle[teal](4:4);
    \ULobstacle[teal](1:5);
    \URobstacle[teal](2:5);
    \ULobstacle[teal](3:5);
    \ULobstacle[green](5:0);
    \ULobstacle[green](0:1);
    \URobstacle[green](5:1);
    \ULobstacle[green](1:2);
    \URobstacle[green](2:2);
    \URobstacle[green](4:2);
    \ULobstacle[green](5:2);
    \URobstacle[green](1:3);
    \ULobstacle[green](2:3);
    \URobstacle[green](3:3);
    \ULobstacle[green](5:4);
    \ULobstacle[green](0:5);
    \ULobstacle[green](4:5);
    \URobstacle[green](5:5);
    
    \draw[-{Latex[length=0.4cm]}, rounded corners=9pt, line width=3pt, orange](1,2.4) -- (1,4) -- (2,4) -- (2,1) -- (1,1) -- (1,2.6);
    \draw[-{Latex[length=0.4cm]}, rounded corners=9pt, line width=3pt, orange](1.4,3) -- (4,3) -- (4,4) -- (3,4) -- (3,2) -- (0,2) -- (0,3) -- (1.6,3);
    
    \end{tikzpicture}
    }%

    \caption{The mirrors (in blue) in the Manhattan model as edges in Bernoulli percolation. The green edges form the dual clusters. The two paths shown are restricted to stay within $\frac{1}{\sqrt{2}}$ of one dual cluster.}
    \label{fig:3:mhtnwalkerclosetodualcluster}
\end{figure}

Let us now state our result more precisely. Consider the models on the $n$-cylinder $\mathbb{Z} \times C_{n}$, $n$ even. We label $s_t$ the horizontal row $\{ t \} \times C_{n}$ - the "$t^{th}$ street". For the Mirror model, let $V^{mir}_{\frac{n}{2}}$ be the random variable given by the smallest $t$ such that $s_t$ has no path connecting it to the first street, $s_1$. In other words, the highest street a path from $s_1$ reaches is exactly $V^{mir}_{\frac{n}{2}} -1$. Let $V^{mat}_{\frac{n}{2}}$ be defined identically with the Manhattan model. 
\begin{thm}\label{thm:1.1mainthm}
    Let $*$ represent $mat$ or $mir$.  
    \begin{enumerate}[label=\alph*)]
        \item Let $p\le Cn^{-1}$, $C>0$ a constant. For all $\alpha>0$, 
    \begin{equation*}
        \mathbb{P}[V^{*}_{\frac{n}{2}} \ge \alpha p^{-2}] 
        \le 
        2A_{*}e^{-\frac{1}{8e^C}  \alpha},
    \end{equation*}
    where $A_{mir} = cosh(\pi)$, and $A_{mat} = \frac{sinh(\pi)}{\pi}$.
    
        \item For any $p \le \frac{1}{2}$ (not necessarily constrained by $p<Cn^{-1}$), and for all $\alpha>0$,
        \begin{equation*}
            \mathbb{P} \left[ V^{*}_{\frac{n}{2}} \le \alpha p^{-2} \right] 
            \le 2\alpha.
        \end{equation*}
    \end{enumerate}
\end{thm}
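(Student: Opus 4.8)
The plan is to track, street by street, the connectivity pattern of the $n$ strands crossing the horizontal cut above $s_t$, which is precisely the Markov chain on the (Walled) Brauer algebra described above: each new street multiplies the current pattern by a random transfer element that is the identity when the street carries no relevant mirrors and otherwise inserts cup--cap generators $e_i$. Write $d_t$ for the number of through-strands (propagating lines joining the cut to $s_1$); then $d_0=n$, the sequence $d_t$ is non-increasing, and $V^{*}_{\frac{n}{2}}$ is essentially the first time $d_t=0$. The geometric input I would record first is that a generator $e_i$ corresponds to a U-turn of the light ray, which reverses vertical direction and therefore needs \emph{two} mirrors in consecutive columns; hence each $e_i$ occurs with probability of order $p^2$ at each of the $\Theta(n)$ cyclic positions of a street. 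Such a generator lowers $d_t$ by exactly $2$ when both affected strands are through-strands, and otherwise merely relocates a defect. Thus disconnection is driven by pairwise annihilation of through-strands, each annihilation costing two mirrors.

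For part (b) I would use only a first-moment estimate, which is why no constraint $p\le Cn^{-1}$ is needed. Since each cap lowers $d_t$ by at most $2$, the event $\{V^{*}_{\frac{n}{2}}\le h\}$ forces at least $n/2$ caps among the first $h$ streets. The expected number of caps in those streets is at most $h\cdot np^2$, so Markov's inequality gives
\[
\mathbb{P}\!\left[V^{*}_{\frac{n}{2}}\le h\right]\le \frac{h\,np^2}{n/2}=2hp^2,
\]
and setting $h=\alpha p^{-2}$ yields the claimed $2\alpha$. The only care needed is to verify the order-$p^2$ per-position cap probability cleanly for $p\le\tfrac12$, and to confirm that ``at least $n/2$ caps'' is genuinely necessary; both follow from the local structure of the transfer element.

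Part (a) is the hard direction, requiring a matching \emph{lower} bound on the annihilation rate together with an exponential tail. I would set this up as a regeneration argument over horizontal bands of height $W=\Theta(p^{-2})$: conditionally on the pattern entering a band, I would show there is a probability $q_0>0$, bounded below uniformly in $n$ and in the incoming pattern, that every through-strand is annihilated before the band is crossed. Iterating over $\lfloor N/W\rfloor$ bands then gives $\mathbb{P}[V^{*}_{\frac{n}{2}}\ge N]\le(1-q_0)^{\lfloor N/W\rfloor}$, i.e. a bound $e^{-cNp^2}=e^{-c\alpha}$. The density constraint $p\le Cn^{-1}$ enters through the empty-column probabilities $(1-p)^{\Theta(n)}\ge e^{-\Theta(np)}\ge e^{-\Theta(C)}$, which is what produces the factor $e^{C}$ in the rate $\tfrac{1}{8e^{C}}$, while the combinatorial $\tfrac18$ comes from the two-mirror cap and the pairing of the $n$ strands. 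The prefactor $2A_{*}$ and the sharp exponent I would obtain by replacing the crude $q_0$ with an exact evaluation of the clearing probability, summed over the number and configuration of surviving through-strands: the wall in the Manhattan/Walled-Brauer case forces the matchings to cross it, constraining their number to one parity and selecting the odd series $\sinh(\pi)/\pi=\sum_{k\ge0}\pi^{2k}/(2k+1)!$, against the even series $\cosh(\pi)=\sum_{k\ge0}\pi^{2k}/(2k)!$ for the unconstrained mirror/Brauer case.

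The main obstacle is exactly the uniform-in-$n$ lower bound on $q_0$. The difficulty is that, because horizontal runs between mirrors have length of order $p^{-1}$ and so wind around the circumference $n\le C/p$, the defects do not diffuse locally but relocate across the whole cycle; I would need to show that this long-range mixing prevents the pairwise annihilation rate from collapsing as $d_t$ shrinks, so that clearing the last few through-strands still costs $O(p^{-2})$ streets rather than a number growing with $n$ (a naive nearest-neighbour annihilating-walk picture would wrongly reintroduce an $n$-dependence). Controlling the dependence between successive streets while extracting the exact hyperbolic constants from the resulting sum over sectors is the technical heart of the argument; by contrast the first-moment bound (b) is routine.
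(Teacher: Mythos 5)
Your framework (a Markov chain on the Brauer/walled Brauer algebra, tracking the number $d_t$ of through-strands) matches the paper's, but your central geometric premise is false, and it breaks part (b). A cap does \emph{not} require two mirrors in consecutive columns: a ray deflected by a mirror travels horizontally along the street until it meets the \emph{next} mirror, at arbitrary distance. So a bar joining columns $i$ and $j$ has probability of order $p^2(1-p)^{|j-i|-1}$, and, summing over pairs, the expected number of caps per street is of order $np\min(1,np)$, not $np^2$. Your Markov-inequality bound then yields $\mathbb{P}[V^{*}_{\frac{n}{2}}\le h]\lesssim hp\min(1,np)$, which at $h=\alpha p^{-2}$ is of order $\alpha\min(p^{-1},n)$ --- far weaker than $2\alpha$. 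In fact no first-moment count of \emph{all} caps can give the claimed bound: caps are plentiful; what is rare is a cap sitting exactly on the last surviving through-strands. The paper's part (b) is instead sequential: since $n$ is even, the through-strand count is always even, so before termination at least two through-strands remain, and for the next street to remove \emph{all} of them it must carry mirrors at (at least) two prescribed columns, an event of conditional probability at most $p^2$. Hence $V^{*}_{\frac{n}{2}}$ stochastically dominates a geometric random variable of parameter $p^2$, and $\mathbb{P}[V^{*}_{\frac{n}{2}}\le\alpha p^{-2}]\le 1-(1-p^2)^{\alpha p^{-2}}\le 2\alpha$ for $p\le\frac12$.

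For part (a) you have an outline, not a proof: everything rests on a clearing probability $q_0$ per band of height $\Theta(p^{-2})$, bounded below uniformly in $n$ and in the incoming pattern, and you state yourself that you cannot establish this; moreover your quantitative inputs inherit the same erroneous $O(p^2)$-per-position cap rate. The paper's actual mechanism is different and is exactly what resolves the obstacle you name. One conditions on each street carrying at most two mirrors (an event of probability bounded below because $np\le C$); under this conditioning, multiplying the current diagram by a street element either preserves the number of bars or adds exactly one, with probability depending \emph{only} on the current bar count $k$ --- proportional to $p^2\binom{n-2k}{2}$ (mirror) or to $p^2\left(\frac n2-k\right)^2$ (Manhattan) --- so the uniformity in $n$ and in the incoming pattern is obtained structurally rather than by estimation. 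Then $W^{*}_{\frac{n}{2}}$ is a sum of $\frac n2$ independent geometric variables, and a Chernoff bound together with the infinite products $\prod_{k\ge1}\left(1+\frac1{k^2}\right)=\frac{\sinh\pi}{\pi}$ and $\prod_{k\ge1}\left(1+\frac{4}{2k(2k-1)}\right)\le\cosh\pi$ produces exactly the constants $A_{mat}$ and $A_{mir}$; they have nothing to do with parity sectors of matchings across the wall. Finally, an inductive comparison lemma (the subtle point being that deleting bad streets from the \emph{middle} of a product could a priori increase bar counts) plus Hoeffding's inequality for the binomial number of good streets transfers the conditioned bound to the full model. None of these three components --- the conditioning, the mean-field bar-addition lemma, the comparison argument --- appears in your proposal.
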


Let us remark on our method from proving part $a)$. For all $p \le Cn^{-1}$, and $n$ large, the probability of obstacles is small, and in particular, the probability that each street $s_k$ has at most two obstacles is large. We show that the model is not changed too much if we actually condition on each $s_k$ having at most two obstacles. This conditioning simplifies the model greatly, in essence removing the cylindrical geometry, making the interactions on each street mean-field, and allows us to do explicit computations. 
Part $b)$ is more straightforward; it is proved by coupling $V_{\frac{n}{2}}^*$ with a geometric random variable $G$ with parameter $p^2$.

In section \ref{section:brauer}, we give key definitions, including the Brauer and Walled Brauer algebras. In section \ref{section:results} we study the model assuming at most two obstacles per street, and obtain the results needed to prove Theorem \ref{thm:1.1mainthm}.\\


    
    

\section{Definitions, and the Brauer algebra}\label{section:brauer}

Let us define the algebraic structures and notation we will use. The \textit{Brauer algebra} $\mathbb{B}_{n,1}$ (see, for example, \cite{brauer}, \cite{wenzl}, \cite{brown}, \cite{cox2007blocks}) is the (formal) complex span of the set of pairings of $2n$ vertices. We think of pairings as graphs, which we will call \textit{diagrams}, with each vertex having degree exactly 1. We arrange the vertices in two horizontal rows, labelling the upper row (the northern vertices) $1^+, 2^+, \dots, n^+$, and the lower (southern) $1^-, \dots, n^-$. We call an edge connecting two northern vertices (or two southern) a \textit{bar}. The number of bars in the north and south is always the same, and we refer to either simply as the number of bars in the diagram. We call an edge connecting a northern and southern vertex a \textit{NS path}.

Multiplication of two diagrams is given by concatenation. If $b, c$ are two diagrams, we align the northern vertices of $b$ with the southern of $c$, and the result is obtained by removing these middle vertices. See Figure \ref{fig:4:multiplicationofdiagramsexample}. This defines $\mathbb{B}_{n,1}$ as an algebra. In general there is a multiplicative parameter $q$, for any $q \in \mathbb{C}$, but for our purposes we only need $q = 1$, which gives the multiplication described above. See \cite{brauer} for more detail.

\begin{figure}[h]
    \centering
    \resizebox{0.95\textwidth}{!}{%
    \begin{tikzpicture}[scale=1]

    \draw[-, rounded corners=10pt, dashed, thick](2, 1) -- (2, 1.5);
    \draw[-, rounded corners=10pt, dashed, thick](0, 1) -- (0, 1.5);
    \draw[-, rounded corners=10pt, dashed, thick](1,1) -- (1, 1.5);
    \draw[-, rounded corners=10pt, dashed, thick](3, 1) -- (3, 1.5);
    \draw[-, rounded corners=10pt, dashed, thick](5,1) -- (5, 1.5);
    \draw[-, rounded corners=10pt, dashed, thick](4,1) -- (4, 1.5);

    \draw[font=\large] (-1, 0.5) node {$b_2$};
    \draw[-, rounded corners=10pt, line width=2pt, orange](1, 1) -- (3, 0);
    \draw[-, rounded corners=10pt, line width=2pt, orange](0, 1) -- (0,0);
    \draw[-, rounded corners=10pt, line width=2pt, orange](2, 1) -- (3, 1);
    \draw[-, rounded corners=10pt, line width=2pt, orange](2, 0) -- (1, 0);
    \draw[-, rounded corners=10pt, line width=2pt, orange](4,0) -- (4, 1);
    \draw[-, rounded corners=10pt, line width=2pt, orange](5,0) -- (5, 1);
    
    \draw[thick, fill=white] (0, 1) circle (0.1cm);
    \draw[thick, fill=white] (1, 1) circle (0.1cm);
    \draw[thick, fill=white] (2, 1) circle (0.1cm);
    \draw[thick, fill=white] (3, 1) circle (0.1cm);
    \draw[thick, fill=white] (4, 1) circle (0.1cm);
    \draw[thick, fill=white] (5, 1) circle (0.1cm);
    \draw[thick, fill=white] (0, 0) circle (0.1cm);
    \draw[thick, fill=white] (1, 0) circle (0.1cm);
    \draw[thick, fill=white] (2, 0) circle (0.1cm);
    \draw[thick, fill=white] (3, 0) circle (0.1cm);
    \draw[thick, fill=white] (4, 0) circle (0.1cm);
    \draw[thick, fill=white] (5, 0) circle (0.1cm);

    \draw[font=\large] (-1, 2) node {$b_1$};
    \draw[-, rounded corners=10pt, line width=2pt, orange](2, 2.5) -- (3, 2.5);
    \draw[-, rounded corners=10pt, line width=2pt, orange](0, 2.5) -- (0, 1.5);
    \draw[-, rounded corners=10pt, line width=2pt, orange](1,2.5) -- (1, 1.5);
    \draw[-, rounded corners=10pt, line width=2pt, orange](2, 1.5) -- (3, 1.5);
    \draw[-, rounded corners=10pt, line width=2pt, orange](5,2.5) -- (5, 1.5);
    \draw[-, rounded corners=10pt, line width=2pt, orange](4,2.5) -- (4, 1.5);
    
    \draw[thick, fill=white] (0, 1.5) circle (0.1cm);
    \draw[thick, fill=white] (1, 1.5) circle (0.1cm);
    \draw[thick, fill=white] (2, 1.5) circle (0.1cm);
    \draw[thick, fill=white] (3, 1.5) circle (0.1cm);
    \draw[thick, fill=white] (4, 1.5) circle (0.1cm);
    \draw[thick, fill=white] (5, 1.5) circle (0.1cm);
    \draw[thick, fill=white] (0, 2.5) circle (0.1cm);
    \draw[thick, fill=white] (1, 2.5) circle (0.1cm);
    \draw[thick, fill=white] (2, 2.5) circle (0.1cm);
    \draw[thick, fill=white] (3, 2.5) circle (0.1cm);
    \draw[thick, fill=white] (4, 2.5) circle (0.1cm);
    \draw[thick, fill=white] (5, 2.5) circle (0.1cm);

    \draw[-, rounded corners=10pt, thick](5.65,1.3) -- (5.95, 1.3);
    \draw[-, rounded corners=10pt, thick](5.65,1.2) -- (5.95, 1.2);

    \draw[font=\large] (13.3, 1.25) node {$b_1 b_2$};
    \draw[-, rounded corners=10pt, line width=2pt, orange](10, 1.75) -- (9, 1.75);
    \draw[-, rounded corners=10pt, line width=2pt, orange](7, 1.75) -- (7, 0.75);
    \draw[-, rounded corners=10pt, line width=2pt, orange](8, 1.75) -- (10, 0.75);
    \draw[-, rounded corners=10pt, line width=2pt, orange](8, 0.75) -- (9, 0.75);
    \draw[-, rounded corners=10pt, line width=2pt, orange](11,1.75) -- (11, 0.75);
    \draw[-, rounded corners=10pt, line width=2pt, orange](12,0.75) -- (12, 1.75);
    
    \draw[thick, fill=white] (7, 0.75) circle (0.1cm);
    \draw[thick, fill=white] (8, 0.75) circle (0.1cm);
    \draw[thick, fill=white] (9, 0.75) circle (0.1cm);
    \draw[thick, fill=white] (10, 0.75) circle (0.1cm);
    \draw[thick, fill=white] (11, 0.75) circle (0.1cm);
    \draw[thick, fill=white] (12, 0.75) circle (0.1cm);
    \draw[thick, fill=white] (7, 1.75) circle (0.1cm);
    \draw[thick, fill=white] (8, 1.75) circle (0.1cm);
    \draw[thick, fill=white] (9, 1.75) circle (0.1cm);
    \draw[thick, fill=white] (10, 1.75) circle (0.1cm);
    \draw[thick, fill=white] (11, 1.75) circle (0.1cm);
    \draw[thick, fill=white] (12, 1.75) circle (0.1cm);
    
    \end{tikzpicture}
    }
    \caption{Two diagrams $b_1$ and $b_1$ (left), concatenated to produce their product (right).}
    \label{fig:4:multiplicationofdiagramsexample}
    
\end{figure}
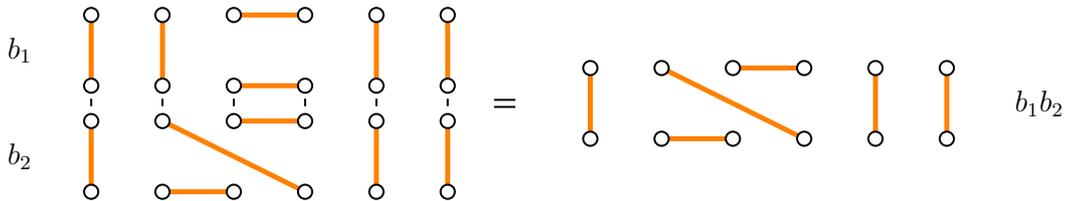

We call the set of all diagrams $B_n$. We call the set of diagrams with exactly $k$ bars $B_n\langle k \rangle$, and the set of diagrams with at least $k$ bars $B_n^k$. Notice that $B_n\langle 0 \rangle$ is exactly the symmetric group $S_n$, and the concatenation multiplication exactly reduced to the multiplication in $S_n$. So $\mathbb{C}S_n$ is a subalgebra of $\mathbb{B}_{n,1}$.

We write $\mathrm{id}$ for the identity in $S_n$ - its diagram has all its edges vertical. We denote the transposition $S_n$ swapping $i$ and $j$ by $(ij)$, and we write $(\overline{ij})$ for the diagram with $i^+$ connected to $j^+$, and $i^-$ connected to $j^-$, and all other edges vertical. See Figure \ref{fig:5:idandtranspositions}.

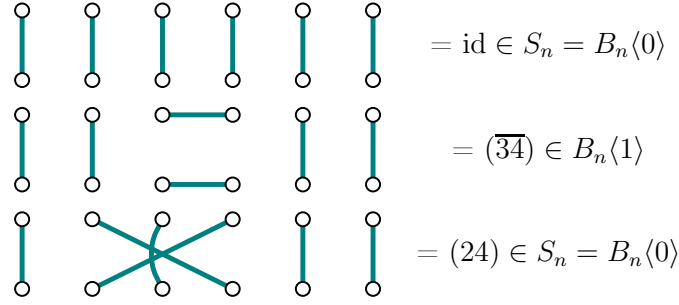
\begin{figure}[h]
    \centering
    \resizebox{0.6\textwidth}{!}{%
    \begin{tikzpicture}[scale=1]
    \draw[font=\large] (7.5, 0.5) node {= $(24) \in S_n = B_n\langle0\rangle$};
    \draw[-, rounded corners=10pt, line width=2pt, teal](1, 1) -- (3, 0);
    \draw[-, rounded corners=10pt, line width=2pt, teal](0, 1) -- (0,0);
    \draw[-, rounded corners=10pt, line width=2pt, teal](2,0) --  (1.75, 0.5) -- (2,1);
    \draw[-, rounded corners=10pt, line width=2pt, teal](3, 1) -- (1, 0);
    \draw[-, rounded corners=10pt, line width=2pt, teal](4,0) -- (4, 1);
    \draw[-, rounded corners=10pt, line width=2pt, teal](5,0) -- (5, 1);
    \draw[thick, fill=white] (0, 1) circle (0.1cm);
    \draw[thick, fill=white] (1, 1) circle (0.1cm);
    \draw[thick, fill=white] (2, 1) circle (0.1cm);
    \draw[thick, fill=white] (3, 1) circle (0.1cm);
    \draw[thick, fill=white] (4, 1) circle (0.1cm);
    \draw[thick, fill=white] (5, 1) circle (0.1cm);
    \draw[thick, fill=white] (0, 0) circle (0.1cm);
    \draw[thick, fill=white] (1, 0) circle (0.1cm);
    \draw[thick, fill=white] (2, 0) circle (0.1cm);
    \draw[thick, fill=white] (3, 0) circle (0.1cm);
    \draw[thick, fill=white] (4, 0) circle (0.1cm);
    \draw[thick, fill=white] (5, 0) circle (0.1cm);
    
    \draw[font=\large] (7.5, 2) node {= $(\overline{34}) \in B_n\langle1\rangle$};
    \draw[-, rounded corners=10pt, line width=2pt, teal](2, 2.5) -- (3, 2.5);
    \draw[-, rounded corners=10pt, line width=2pt, teal](0, 2.5) -- (0, 1.5);
    \draw[-, rounded corners=10pt, line width=2pt, teal](1,2.5) -- (1, 1.5);
    \draw[-, rounded corners=10pt, line width=2pt, teal](2, 1.5) -- (3, 1.5);
    \draw[-, rounded corners=10pt, line width=2pt, teal](5,2.5) -- (5, 1.5);
    \draw[-, rounded corners=10pt, line width=2pt, teal](4,2.5) -- (4, 1.5);
    \draw[thick, fill=white] (0, 1.5) circle (0.1cm);
    \draw[thick, fill=white] (1, 1.5) circle (0.1cm);
    \draw[thick, fill=white] (2, 1.5) circle (0.1cm);
    \draw[thick, fill=white] (3, 1.5) circle (0.1cm);
    \draw[thick, fill=white] (4, 1.5) circle (0.1cm);
    \draw[thick, fill=white] (5, 1.5) circle (0.1cm);
    \draw[thick, fill=white] (0, 2.5) circle (0.1cm);
    \draw[thick, fill=white] (1, 2.5) circle (0.1cm);
    \draw[thick, fill=white] (2, 2.5) circle (0.1cm);
    \draw[thick, fill=white] (3, 2.5) circle (0.1cm);
    \draw[thick, fill=white] (4, 2.5) circle (0.1cm);
    \draw[thick, fill=white] (5, 2.5) circle (0.1cm);
    
    \draw[font=\large] (7.5, 3.5) node {= $\mathrm{id} \in S_n =  B_n\langle0\rangle$};
    \draw[-, rounded corners=10pt, line width=2pt, teal](3,3) -- (3, 4);
    \draw[-, rounded corners=10pt, line width=2pt, teal](0,3) -- (0, 4);
    \draw[-, rounded corners=10pt, line width=2pt, teal](1,3) -- (1, 4);
    \draw[-, rounded corners=10pt, line width=2pt, teal](2,3) -- (2, 4);
    \draw[-, rounded corners=10pt, line width=2pt, teal](5,3) -- (5, 4);
    \draw[-, rounded corners=10pt, line width=2pt, teal](4,3) -- (4, 4);
    \draw[thick, fill=white] (0, 3) circle (0.1cm);
    \draw[thick, fill=white] (1, 3) circle (0.1cm);
    \draw[thick, fill=white] (2, 3) circle (0.1cm);
    \draw[thick, fill=white] (3, 3) circle (0.1cm);
    \draw[thick, fill=white] (4, 3) circle (0.1cm);
    \draw[thick, fill=white] (5, 3) circle (0.1cm);
    \draw[thick, fill=white] (0, 4) circle (0.1cm);
    \draw[thick, fill=white] (1, 4) circle (0.1cm);
    \draw[thick, fill=white] (2, 4) circle (0.1cm);
    \draw[thick, fill=white] (3, 4) circle (0.1cm);
    \draw[thick, fill=white] (4, 4) circle (0.1cm);
    \draw[thick, fill=white] (5, 4) circle (0.1cm);
    
    \end{tikzpicture}
    }
    \caption{The identity element, the element $(\overline{34}) \in B_n\langle1\rangle$, and the transposition $(24) \in S_n = B_N\langle0\rangle$.}
    \label{fig:5:idandtranspositions}
    
\end{figure}

Finally, we remark that if $b$ has $k$ bars, and $c$ is any diagram in $B_n$, then $bc$ must have at least $k$ bars:
\begin{equation}\label{rmk:multdoesntremovebars}
    b \in B_{n}\langle k \rangle \ \Rightarrow \ bc \in B_n^k
\end{equation}\\

Let us now see how the Brauer algebra can be used to describe the models. Let $n$ be even from hereon in. Observe that given a configuration $\sigma_i$ of mirrors on a street $s_i$ on the $n$-cylinder, the paths through the street form a diagram $b(\sigma_i) \in B_n$. See Figure \ref{fig:6:pathstodiagrams} for an illustration. Moreover, on any section of the cylinder, say, from street $s_i$ to $s_j$, given a configuration of mirrors $\sigma_{i \to j}$ the paths through those streets form a diagram $b(\sigma_{i \to j})$. Crucially, we see that $b(\sigma_{i \to j}) = b(\sigma_i) \cdots b(\sigma_j)$, where the multiplication on the right hand side is in the Brauer algebra. See Figure \ref{fig:7:concatenatingstreets}.

\begin{figure}[h]
    
\centering
\resizebox{1.0\textwidth}{!}{%
\begin{tikzpicture}[scale=1.4, every node/.style={transform shape}]
    
    \draw[-{Latex[length=0.4cm]}] (1,-0.9) -- (1,2.9);
    \draw[-{Latex[length=0.4cm]}] (3,-0.9) -- (3,2.9);
    \draw[-{Latex[length=0.4cm]}] (5,-0.9) -- (5,2.9);
    \draw[-{Latex[length=0.4cm]}] (0,2.9) -- (0,-0.9);
    \draw[-{Latex[length=0.4cm]}] (2,2.9) -- (2,-0.9);
    \draw[-{Latex[length=0.4cm]}] (4,2.9) -- (4,-0.9);
    \draw[-{Latex[length=0.4cm]}] (5.9,2) -- (-0.9,2);
    \draw[-{Latex[length=0.4cm]}] (5.9,0) -- (-0.9,0);
    \draw[-{Latex[length=0.4cm]}] (-0.9,1) -- (5.9,1);
    
    \URobstacle[teal](3:1);
    \ULobstacle[teal](0:1);
    \ULobstacle[teal](2:1);
    
    \draw[-{Latex[length=0.4cm]}, rounded corners=9pt, line width=3pt, orange](-0.9, 1) -- (0,1) -- (0,0.5);
    \draw[-{Latex[length=0.4cm]}, rounded corners=9pt, line width=3pt, orange](1, 0.5) -- (1, 1.5);
    \draw[-{Latex[length=0.4cm]}, rounded corners=9pt, line width=3pt, orange](0, 1.5) -- (0, 1) -- (2,1) -- (2,0.5);
    \draw[-{Latex[length=0.4cm]}, rounded corners=9pt, line width=3pt, orange](2, 1.5) -- (2, 1) -- (3,1) -- (3,1.5);
    \draw[-{Latex[length=0.4cm]}, rounded corners=9pt, line width=3pt, orange](3, 0.5) -- (3, 1) -- (5.9,1);
    \draw[-{Latex[length=0.4cm]}, rounded corners=9pt, line width=3pt, orange](4, 1.5) -- (4, 0.5);
    \draw[-{Latex[length=0.4cm]}, rounded corners=9pt, line width=3pt, orange](5, 0.5) -- (5, 1.5);
    
    \draw[thick, fill=white] (0, 0.5) circle (0.1cm);
    \draw[thick, fill=white] (1, 0.5) circle (0.1cm);
    \draw[thick, fill=white] (2, 0.5) circle (0.1cm);
    \draw[thick, fill=white] (3, 0.5) circle (0.1cm);
    \draw[thick, fill=white] (4, 0.5) circle (0.1cm);
    \draw[thick, fill=white] (5, 0.5) circle (0.1cm);
    \draw[thick, fill=white] (0, 1.5) circle (0.1cm);
    \draw[thick, fill=white] (1, 1.5) circle (0.1cm);
    \draw[thick, fill=white] (2, 1.5) circle (0.1cm);
    \draw[thick, fill=white] (3, 1.5) circle (0.1cm);
    \draw[thick, fill=white] (4, 1.5) circle (0.1cm);
    \draw[thick, fill=white] (5, 1.5) circle (0.1cm);
    
    \end{tikzpicture}
    \quad
    \begin{tikzpicture}[scale=1.4, every node/.style={transform shape}]
    
    \draw[-{Latex[length=0.4cm]},white] (1,-0.9) -- (1,2.9);
    \draw[-{Latex[length=0.4cm]},white] (3,-0.9) -- (3,2.9);
    \draw[-{Latex[length=0.4cm]},white] (5,-0.9) -- (5,2.9);
    \draw[-{Latex[length=0.4cm]},white] (0,2.9) -- (0,-0.9);
    \draw[-{Latex[length=0.4cm]},white] (2,2.9) -- (2,-0.9);
    \draw[-{Latex[length=0.4cm]},white] (4,2.9) -- (4,-0.9);
    \draw[-{Latex[length=0.4cm]},white] (5.9,2) -- (-0.9,2);
    \draw[-{Latex[length=0.4cm]},white] (5.9,0) -- (-0.9,0);
    \draw[-{Latex[length=0.4cm]},white] (-0.9,1) -- (5.9,1);
    
    \draw[-, rounded corners=40pt, line width=3pt, orange](3, 0.5) -- (1.5,1) -- (0,0.5);
    \draw[-, rounded corners=9pt, line width=3pt, orange](1, 0.5) -- (1, 1.5);
    \draw[-, rounded corners=9pt, line width=3pt, orange](0, 1.5) -- (2,0.5);
    \draw[-, rounded corners=15pt, line width=3pt, orange](2, 1.5) -- (2.5, 1.25) -- (3,1.5);
    \draw[-, rounded corners=9pt, line width=3pt, orange](4, 1.5) -- (4, 0.5);
    \draw[-, rounded corners=9pt, line width=3pt, orange](5, 0.5) -- (5, 1.5);
    
    \draw[thick, fill=white] (0, 0.5) circle (0.1cm);
    \draw[thick, fill=white] (1, 0.5) circle (0.1cm);
    \draw[thick, fill=white] (2, 0.5) circle (0.1cm);
    \draw[thick, fill=white] (3, 0.5) circle (0.1cm);
    \draw[thick, fill=white] (4, 0.5) circle (0.1cm);
    \draw[thick, fill=white] (5, 0.5) circle (0.1cm);
    \draw[thick, fill=white] (0, 1.5) circle (0.1cm);
    \draw[thick, fill=white] (1, 1.5) circle (0.1cm);
    \draw[thick, fill=white] (2, 1.5) circle (0.1cm);
    \draw[thick, fill=white] (3, 1.5) circle (0.1cm);
    \draw[thick, fill=white] (4, 1.5) circle (0.1cm);
    \draw[thick, fill=white] (5, 1.5) circle (0.1cm);
    
    \end{tikzpicture}
    
}%
    
    \caption{An example of a configuration of mirrors $\sigma_i$ on street $s_i$ in the Manhattan model (left), and the resulting diagram $b(\sigma_i)$ (right).}
    \label{fig:6:pathstodiagrams}
\end{figure}
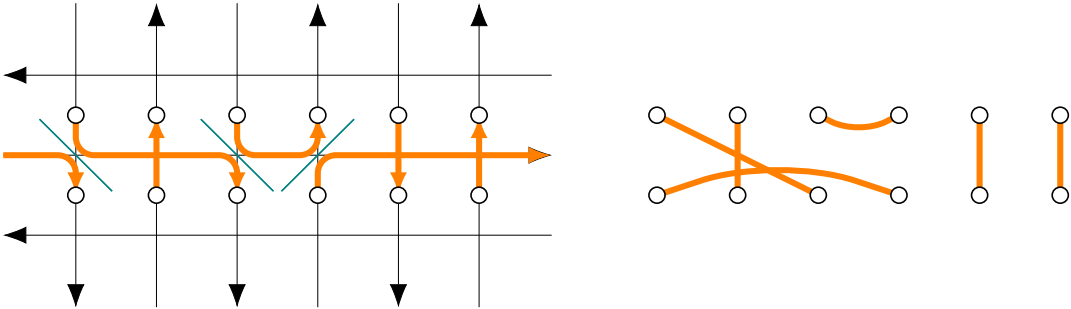

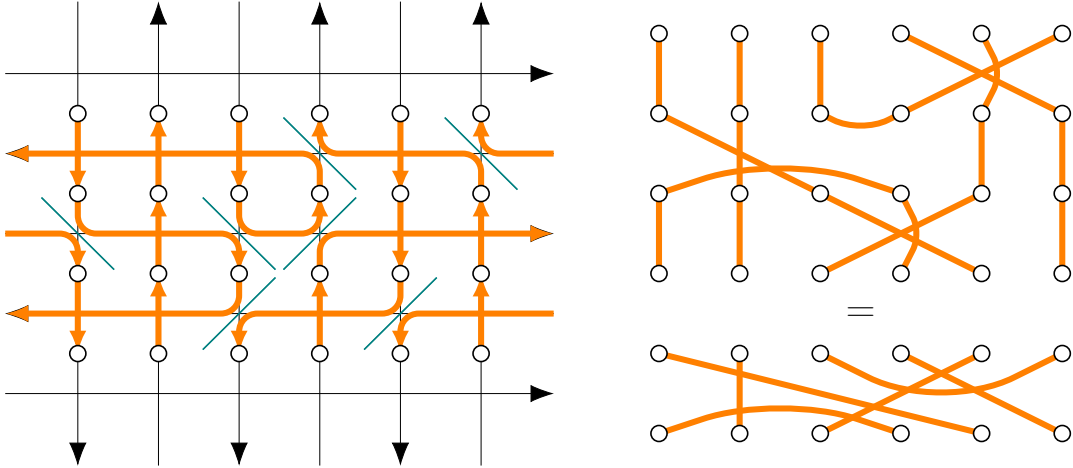
\begin{figure}[h]
    
\centering
\resizebox{1.0\textwidth}{!}{%
\begin{tikzpicture}[scale=1.4, every node/.style={transform shape}]
    
    \draw[-{Latex[length=0.4cm]}] (1,-0.9) -- (1,4.9);
    \draw[-{Latex[length=0.4cm]}] (3,-0.9) -- (3,4.9);
    \draw[-{Latex[length=0.4cm]}] (5,-0.9) -- (5,4.9);
    \draw[-{Latex[length=0.4cm]}] (0,4.9) -- (0,-0.9);
    \draw[-{Latex[length=0.4cm]}] (2,4.9) -- (2,-0.9);
    \draw[-{Latex[length=0.4cm]}] (4,4.9) -- (4,-0.9);
    \draw[-{Latex[length=0.4cm]}] (5.9,1) -- (-0.9,1);
    \draw[-{Latex[length=0.4cm]}] (5.9,3) -- (-0.9,3);
    \draw[-{Latex[length=0.4cm]}] (-0.9,0) -- (5.9,0);
    \draw[-{Latex[length=0.4cm]}] (-0.9,2) -- (5.9,2);
    \draw[-{Latex[length=0.4cm]}] (-0.9,4) -- (5.9,4);
    
    \URobstacle[teal](2:1);
    \URobstacle[teal](4:1);
    \URobstacle[teal](3:2);
    \ULobstacle[teal](0:2);
    \ULobstacle[teal](2:2);
    \ULobstacle[teal](3:3);
    \ULobstacle[teal](5:3);
    
    \draw[-{Latex[length=0.4cm]}, rounded corners=9pt, line width=3pt, orange](0, 3.5) -- (0, 2.5);
    \draw[-{Latex[length=0.4cm]}, rounded corners=9pt, line width=3pt, orange](1, 2.5) -- (1, 3.5);
    \draw[-{Latex[length=0.4cm]}, rounded corners=9pt, line width=3pt, orange](3, 2.5) -- (3, 3) -- (-0.9, 3);
    \draw[-{Latex[length=0.4cm]}, rounded corners=9pt, line width=3pt, orange](5, 2.5) -- (5, 3) -- (3, 3) -- (3, 3.5);
    \draw[-{Latex[length=0.4cm]}, rounded corners=9pt, line width=3pt, orange](4, 3.5) -- (4, 2.5);
    \draw[-{Latex[length=0.4cm]}, rounded corners=9pt, line width=3pt, orange](5.9, 3) -- (5, 3) -- (5, 3.5);
    \draw[-{Latex[length=0.4cm]}, rounded corners=9pt, line width=3pt, orange](2, 3.5) -- (2, 2.5);
    \draw[-{Latex[length=0.4cm]}, rounded corners=9pt, line width=3pt, orange](-0.9, 2) -- (0,2) -- (0,1.5);
    \draw[-{Latex[length=0.4cm]}, rounded corners=9pt, line width=3pt, orange](1, 1.5) -- (1, 2.5);
    \draw[-{Latex[length=0.4cm]}, rounded corners=9pt, line width=3pt, orange](0, 2.5) -- (0, 2) -- (2,2) -- (2,1.5);
    \draw[-{Latex[length=0.4cm]}, rounded corners=9pt, line width=3pt, orange](2, 2.5) -- (2, 2) -- (3,2) -- (3,2.5);
    \draw[-{Latex[length=0.4cm]}, rounded corners=9pt, line width=3pt, orange](3, 1.5) -- (3,2) -- (5.9,2);
    \draw[-{Latex[length=0.4cm]}, rounded corners=9pt, line width=3pt, orange](4, 2.5) -- (4, 1.5);
    \draw[-{Latex[length=0.4cm]}, rounded corners=9pt, line width=3pt, orange](5, 1.5) -- (5, 2.5);
    \draw[-{Latex[length=0.4cm]}, rounded corners=9pt, line width=3pt, orange](0, 1.5) -- (0, 0.5);
    \draw[-{Latex[length=0.4cm]}, rounded corners=9pt, line width=3pt, orange](1, 0.5) -- (1, 1.5);
    \draw[-{Latex[length=0.4cm]}, rounded corners=9pt, line width=3pt, orange](2, 1.5) -- (2, 1) -- (-0.9, 1);
    \draw[-{Latex[length=0.4cm]}, rounded corners=9pt, line width=3pt, orange](4, 1.5) -- (4, 1) -- (2, 1) -- (2, 0.5);
    \draw[-{Latex[length=0.4cm]}, rounded corners=9pt, line width=3pt, orange](3, 0.5) -- (3, 1.5);
    \draw[-{Latex[length=0.4cm]}, rounded corners=9pt, line width=3pt, orange](5.9, 1) -- (4, 1) -- (4, 0.5);
    \draw[-{Latex[length=0.4cm]}, rounded corners=9pt, line width=3pt, orange](5, 0.5) -- (5, 1.5);
    
    \draw[thick, fill=white] (0, 0.5) circle (0.1cm);
    \draw[thick, fill=white] (1, 0.5) circle (0.1cm);
    \draw[thick, fill=white] (2, 0.5) circle (0.1cm);
    \draw[thick, fill=white] (3, 0.5) circle (0.1cm);
    \draw[thick, fill=white] (4, 0.5) circle (0.1cm);
    \draw[thick, fill=white] (5, 0.5) circle (0.1cm);
    \draw[thick, fill=white] (0, 1.5) circle (0.1cm);
    \draw[thick, fill=white] (1, 1.5) circle (0.1cm);
    \draw[thick, fill=white] (2, 1.5) circle (0.1cm);
    \draw[thick, fill=white] (3, 1.5) circle (0.1cm);
    \draw[thick, fill=white] (4, 1.5) circle (0.1cm);
    \draw[thick, fill=white] (5, 1.5) circle (0.1cm);
    \draw[thick, fill=white] (0, 2.5) circle (0.1cm);
    \draw[thick, fill=white] (1, 2.5) circle (0.1cm);
    \draw[thick, fill=white] (2, 2.5) circle (0.1cm);
    \draw[thick, fill=white] (3, 2.5) circle (0.1cm);
    \draw[thick, fill=white] (4, 2.5) circle (0.1cm);
    \draw[thick, fill=white] (5, 2.5) circle (0.1cm);
    \draw[thick, fill=white] (0, 3.5) circle (0.1cm);
    \draw[thick, fill=white] (1, 3.5) circle (0.1cm);
    \draw[thick, fill=white] (2, 3.5) circle (0.1cm);
    \draw[thick, fill=white] (3, 3.5) circle (0.1cm);
    \draw[thick, fill=white] (4, 3.5) circle (0.1cm);
    \draw[thick, fill=white] (5, 3.5) circle (0.1cm);
    
    \end{tikzpicture}
    \quad
    \begin{tikzpicture}[scale=1.4, every node/.style={transform shape}]
    \draw[font=\large] (2.5,0) node {\textbf{=}};
    
    \draw[-{Latex[length=0.4cm]},white] (1,-1.9) -- (1,4.9);
    \draw[-{Latex[length=0.4cm]},white] (3,-1.9) -- (3,4.9);
    \draw[-{Latex[length=0.4cm]},white] (5,-1.9) -- (5,4.9);
    \draw[-{Latex[length=0.4cm]},white] (0,4.9) -- (0,-1.9);
    \draw[-{Latex[length=0.4cm]},white] (2,4.9) -- (2,-1.9);
    \draw[-{Latex[length=0.4cm]},white] (4,4.9) -- (4,-1.9);
    \draw[-{Latex[length=0.4cm]},white] (5.9,1) -- (-0.9,1);
    \draw[-{Latex[length=0.4cm]},white] (5.9,3) -- (-0.9,3);
    \draw[-{Latex[length=0.4cm]},white] (-0.9,0) -- (5.9,0);
    \draw[-{Latex[length=0.4cm]},white] (-0.9,2) -- (5.9,2);
    \draw[-{Latex[length=0.4cm]},white] (-0.9,4) -- (5.9,4);
    
    \draw[-, rounded corners=40pt, line width=3pt, orange](0, 2.5) -- (0,3.5);
    \draw[-, rounded corners=9pt, line width=3pt, orange](1, 3.5) -- (1, 2.5);
    \draw[-, rounded corners=9pt, line width=3pt, orange](2, 2.5) -- (2,3.5);
    \draw[-, rounded corners=15pt, line width=3pt, orange](3, 2.5) -- (5,3.5);
    \draw[-, rounded corners=9pt, line width=3pt, orange](3, 3.5) -- (5, 2.5);
    \draw[-, rounded corners=9pt, line width=3pt, orange](4, 2.5) -- (4.25, 3) -- (4, 3.5);
    \draw[-, rounded corners=40pt, line width=3pt, orange](3, 1.5) -- (1.5,2) -- (0,1.5);
    \draw[-, rounded corners=9pt, line width=3pt, orange](1, 1.5) -- (1, 2.5);
    \draw[-, rounded corners=9pt, line width=3pt, orange](0, 2.5) -- (2,1.5);
    \draw[-, rounded corners=15pt, line width=3pt, orange](2, 2.5) -- (2.5, 2.25) -- (3,2.5);
    \draw[-, rounded corners=9pt, line width=3pt, orange](4, 2.5) -- (4, 1.5);
    \draw[-, rounded corners=9pt, line width=3pt, orange](5, 1.5) -- (5, 2.5);
    \draw[-, rounded corners=40pt, line width=3pt, orange](0, 1.5) -- (0,0.5);
    \draw[-, rounded corners=9pt, line width=3pt, orange](1, 1.5) -- (1, 0.5);
    \draw[-, rounded corners=9pt, line width=3pt, orange](2, 1.5) -- (4, 0.5);
    \draw[-, rounded corners=15pt, line width=3pt, orange](2, 0.5) -- (4, 1.5);
    \draw[-, rounded corners=9pt, line width=3pt, orange](3, 0.5) -- (3.25, 1) -- (3, 1.5);
    \draw[-, rounded corners=9pt, line width=3pt, orange](5, 0.5) -- (5, 1.5);
    \draw[-, rounded corners=40pt, line width=3pt, orange](0, -0.5) -- (4,-1.5);
    \draw[-, rounded corners=40pt, line width=3pt, orange](0, -1.5) -- (1.5, -1) -- (3, -1.5);
    \draw[-, rounded corners=9pt, line width=3pt, orange](2, -1.5) -- (4, -0.5);
    \draw[-, rounded corners=40pt, line width=3pt, orange](2, -0.5) -- (3.5, -1.25) -- (5, -0.5);
    \draw[-, rounded corners=9pt, line width=3pt, orange](3, -0.5) -- (5, -1.5);
    \draw[-, rounded corners=9pt, line width=3pt, orange](1, -0.5) -- (1, -1.5);
    
    %
    \draw[thick, fill=white] (0, -1.5) circle (0.1cm);
    \draw[thick, fill=white] (1, -1.5) circle (0.1cm);
    \draw[thick, fill=white] (2, -1.5) circle (0.1cm);
    \draw[thick, fill=white] (3, -1.5) circle (0.1cm);
    \draw[thick, fill=white] (4, -1.5) circle (0.1cm);
    \draw[thick, fill=white] (5, -1.5) circle (0.1cm);
    \draw[thick, fill=white] (0, -0.5) circle (0.1cm);
    \draw[thick, fill=white] (1, -0.5) circle (0.1cm);
    \draw[thick, fill=white] (2, -0.5) circle (0.1cm);
    \draw[thick, fill=white] (3, -0.5) circle (0.1cm);
    \draw[thick, fill=white] (4, -0.5) circle (0.1cm);
    \draw[thick, fill=white] (5, -0.5) circle (0.1cm);
    \draw[thick, fill=white] (0, 0.5) circle (0.1cm);
    \draw[thick, fill=white] (1, 0.5) circle (0.1cm);
    \draw[thick, fill=white] (2, 0.5) circle (0.1cm);
    \draw[thick, fill=white] (3, 0.5) circle (0.1cm);
    \draw[thick, fill=white] (4, 0.5) circle (0.1cm);
    \draw[thick, fill=white] (5, 0.5) circle (0.1cm);
    \draw[thick, fill=white] (0, 1.5) circle (0.1cm);
    \draw[thick, fill=white] (1, 1.5) circle (0.1cm);
    \draw[thick, fill=white] (2, 1.5) circle (0.1cm);
    \draw[thick, fill=white] (3, 1.5) circle (0.1cm);
    \draw[thick, fill=white] (4, 1.5) circle (0.1cm);
    \draw[thick, fill=white] (5, 1.5) circle (0.1cm);
    \draw[thick, fill=white] (0, 2.5) circle (0.1cm);
    \draw[thick, fill=white] (1, 2.5) circle (0.1cm);
    \draw[thick, fill=white] (2, 2.5) circle (0.1cm);
    \draw[thick, fill=white] (3, 2.5) circle (0.1cm);
    \draw[thick, fill=white] (4, 2.5) circle (0.1cm);
    \draw[thick, fill=white] (5, 2.5) circle (0.1cm);
    \draw[thick, fill=white] (0, 3.5) circle (0.1cm);
    \draw[thick, fill=white] (1, 3.5) circle (0.1cm);
    \draw[thick, fill=white] (2, 3.5) circle (0.1cm);
    \draw[thick, fill=white] (3, 3.5) circle (0.1cm);
    \draw[thick, fill=white] (4, 3.5) circle (0.1cm);
    \draw[thick, fill=white] (5, 3.5) circle (0.1cm);
    
    \end{tikzpicture}
    
}%
    
    \caption{The paths through three consecutive streets in the Manhattan model (left), the three resulting diagrams (upper right), and their product (lower right), which gives the paths through the union of the three streets.}
    \label{fig:7:concatenatingstreets}
\end{figure}

Each $\sigma_i$ is randomly distributed, so $b(\sigma_i)$ is a random diagram in $B_n$. We can think of the distribution of this random diagram as an element $Z^{(i)}$ of the algebra, as:
\begin{equation*}
    Z^{(i)} = \sum_{g \in B_n} \mathbb{P}[b(\sigma_i) = g] \cdot g
\end{equation*}
The following lemma lets us describe the paths through any number of consecutive streets. Note that it does not use the specific distributions of the random diagrams given by different streets, it only uses that they are independent.
\begin{lem}
    The distribution of the random diagram (as an element of the Brauer algebra) produced by the paths through streets $s_i, \dots s_j$ is given by:
    \begin{equation*}
        Z^{(i)} \cdots Z^{(j)}
    \end{equation*}
\end{lem}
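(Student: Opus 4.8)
The plan is to reduce to the two-street case and then induct on the number of streets, the heart of the matter being that multiplication in $\mathbb{B}_{n,1}$ pushes the product of the marginal distributions forward onto the distribution of the product diagram. This is exactly the familiar statement that convolution of measures corresponds to multiplication in the monoid (here, diagram) algebra. First I would treat $j = i+1$. Since multiplication in $\mathbb{B}_{n,1}$ is bilinear, expanding the product gives
\[
    Z^{(i)} Z^{(i+1)} = \sum_{g, g' \in B_n} \mathbb{P}[b(\sigma_i) = g]\,\mathbb{P}[b(\sigma_{i+1}) = g']\, g g'.
\]
Because $\sigma_i$ and $\sigma_{i+1}$ are independent, each coefficient factorises as the joint probability $\mathbb{P}[b(\sigma_i) = g,\ b(\sigma_{i+1}) = g']$. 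I would then collect terms according to the value of the product $gg'$ in the algebra: for a fixed diagram $h \in B_n$, its coefficient is $\sum_{g,g'\,:\,gg' = h} \mathbb{P}[b(\sigma_i) = g,\ b(\sigma_{i+1}) = g']$, which is precisely $\mathbb{P}[b(\sigma_i)\,b(\sigma_{i+1}) = h]$. Using the geometric identity $b(\sigma_i)\,b(\sigma_{i+1}) = b(\sigma_{i \to i+1})$ recorded before the lemma, this coefficient equals $\mathbb{P}[b(\sigma_{i\to i+1}) = h]$, so $Z^{(i)} Z^{(i+1)}$ is exactly the algebra element encoding the distribution of the combined diagram.

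Next I would run the induction on $j - i$. Assuming the claim for the streets $s_i, \dots, s_{j-1}$, the element $Z^{(i)} \cdots Z^{(j-1)}$ is the distribution element of $b(\sigma_{i\to j-1})$; associativity in $\mathbb{B}_{n,1}$ lets me write $Z^{(i)} \cdots Z^{(j)} = \bigl(Z^{(i)}\cdots Z^{(j-1)}\bigr)\,Z^{(j)}$ and apply the two-factor computation above to the pair $\bigl(b(\sigma_{i\to j-1}),\, b(\sigma_j)\bigr)$. The point that must be made explicit in the inductive step is independence: $b(\sigma_{i\to j-1})$ is a function of the mirror configurations on streets $s_i, \dots, s_{j-1}$ alone, whereas $b(\sigma_j)$ depends only on $s_j$, so the two random diagrams are independent and the factorisation of coefficients goes through verbatim.

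The only genuinely non-formal point, and the step I would be most careful about, is that the diagram-multiplication map $B_n \times B_n \to B_n$ is many-to-one, so several summands $g g'$ can collapse onto the same diagram $h$. The content of the argument is that, thanks to independence, summing the coefficients over each fibre $\{(g,g') : gg' = h\}$ reassembles exactly the single probability $\mathbb{P}[b(\sigma_{i\to j}) = h]$. Everything else is the formal bilinearity and associativity of $\mathbb{B}_{n,1}$ together with the already-established identity $b(\sigma_{i\to j}) = b(\sigma_i)\cdots b(\sigma_j)$; note, as the lemma remarks, that only the independence of the per-street diagrams is used, and not the specific form of their distributions.
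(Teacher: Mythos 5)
Your proposal is correct and rests on exactly the same ingredients as the paper's proof: the concatenation identity $b(\sigma_{i\to j}) = b(\sigma_i)\cdots b(\sigma_j)$, independence of the per-street configurations, and bilinearity of the algebra product with terms collected over the fibres of diagram multiplication. The only difference is organisational — you package the multi-factor case as an induction on two-street steps, while the paper performs the same computation in a single direct expansion over all factors at once.
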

\begin{proof}
We see that
\begin{equation*}
    \begin{split}
        \sum_{g \in B_n} \mathbb{P}[b(\sigma_{i \to j}) = g] \cdot g 
        &=
        \sum_{g \in B_n} \mathbb{P}[b(\sigma_{i}) \cdots b(\sigma_j) = g] \cdot g \\
        &=
        \sum_{g \in B_n} \sum_{g_i \cdots g_j = g} \mathbb{P}[b(\sigma_i) = g_i] \cdots \mathbb{P}[b(\sigma_j) = g_j] \cdot g_i \cdots g_j \\
        &=
        Z^{(i)} \cdots Z^{(j)}
    \end{split}
\end{equation*}
where we use that the configurations on each street are independent. 
\end{proof}
We are interested in the highest (or most northerly) street reached by the paths starting at the first street $s_1$. One more than this is the first street which has no path connecting it to $s_1$. Using the notation above, this is the smallest $i$ such that 
\begin{equation*}
    Z^{(1)} \cdots Z^{(i)} \in B_n \langle \frac{n}{2} \rangle.
\end{equation*}

Now, in the Mirror model, the distribution of mirrors is iid on each street. Let $Z_{mir}$ be the distribution of the random diagram (as an element of the Brauer algebra) produced by the paths through this random configuration on one street. Let $V^{mir}_{k}$ be the smallest $i$ such that $(Z_{mir})^i \in B_n\langle k \rangle$. We're primarily interested in $V^{mir}_{\frac{n}{2}}$. \\

The Manhattan model is almost identical in this regard, with two differences. The first is that the distribution of mirrors on a street is dependent on whether the street is eastbound or westbound. We can let $Z_{(mat, E)}, Z_{(mat, W)}$ be the random diagrams that arise in these cases, respectively. 

Secondly, the diagrams that arise in the Manhattan case actually live in a subalgebra of $\mathbb{B}_{n,1}$. Note that each vertical column of the cylinder $\mathbb{Z} \times \{ i\}$ is southbound for $i$ odd, northbound for $i$ even. This means that on a chosen street, the vertices $i^+$ for $i$ odd, and $i^-$ for $i$ even, can be thought of as "entrypoints" to the street. Similarly, each $j^+$ for $j$ even, $j^-$ for $j$ odd can be thought of as "exitpoints" to the street. In particular, in the diagram which results from the street, \textit{exitpoints must be connected to entrypoints}. This condition can also be thought of as: a NS path must connect vertices of the same parity, and a bar must connect vertices of different parity. See Figure \ref{fig:8:exit&entrypoints}.

\begin{figure}[h]
    
\centering
\resizebox{0.5\textwidth}{!}{%
\begin{tikzpicture}[scale=1.4, every node/.style={transform shape}]
    
    \draw[-{Latex[length=0.4cm]}] (1,-0.9) -- (1,2.9);
    \draw[-{Latex[length=0.4cm]}] (3,-0.9) -- (3,2.9);
    \draw[-{Latex[length=0.4cm]}] (5,-0.9) -- (5,2.9);
    \draw[-{Latex[length=0.4cm]}] (0,2.9) -- (0,-0.9);
    \draw[-{Latex[length=0.4cm]}] (2,2.9) -- (2,-0.9);
    \draw[-{Latex[length=0.4cm]}] (4,2.9) -- (4,-0.9);
    \draw[-{Latex[length=0.4cm]}] (5.9,2) -- (-0.9,2);
    \draw[-{Latex[length=0.4cm]}] (5.9,0) -- (-0.9,0);
    \draw[-{Latex[length=0.4cm]}] (-0.9,1) -- (5.9,1);
    
    \URobstacle[teal](3:1);
    \ULobstacle[teal](0:1);
    \ULobstacle[teal](2:1);
    
    \draw[-{Latex[length=0.4cm]}, rounded corners=9pt, line width=3pt, orange](-0.9, 1) -- (0,1) -- (0,0.5);
    \draw[-{Latex[length=0.4cm]}, rounded corners=9pt, line width=3pt, orange](1, 0.5) -- (1, 1.5);
    \draw[-{Latex[length=0.4cm]}, rounded corners=9pt, line width=3pt, orange](0, 1.5) -- (0, 1) -- (2,1) -- (2,0.5);
    \draw[-{Latex[length=0.4cm]}, rounded corners=9pt, line width=3pt, orange](2, 1.5) -- (2, 1) -- (3,1) -- (3,1.5);
    \draw[-{Latex[length=0.4cm]}, rounded corners=9pt, line width=3pt, orange](3, 0.5) -- (3, 1) -- (5.9,1);
    \draw[-{Latex[length=0.4cm]}, rounded corners=9pt, line width=3pt, orange](4, 1.5) -- (4, 0.5);
    \draw[-{Latex[length=0.4cm]}, rounded corners=9pt, line width=3pt, orange](5, 0.5) -- (5, 1.5);
    
    \draw[thick, fill=teal] (0, 0.5) circle (0.1cm);
    \draw[thick, fill=yellow] (1, 0.5) circle (0.1cm);
    \draw[thick, fill=teal] (2, 0.5) circle (0.1cm);
    \draw[thick, fill=yellow] (3, 0.5) circle (0.1cm);
    \draw[thick, fill=teal] (4, 0.5) circle (0.1cm);
    \draw[thick, fill=yellow] (5, 0.5) circle (0.1cm);
    \draw[thick, fill=yellow] (0, 1.5) circle (0.1cm);
    \draw[thick, fill=teal] (1, 1.5) circle (0.1cm);
    \draw[thick, fill=yellow] (2, 1.5) circle (0.1cm);
    \draw[thick, fill=teal] (3, 1.5) circle (0.1cm);
    \draw[thick, fill=yellow] (4, 1.5) circle (0.1cm);
    \draw[thick, fill=teal] (5, 1.5) circle (0.1cm);
    
    \end{tikzpicture}
    
}%
    
    \caption{An example of paths through a street in the Manhattan model, with entrypoints coloured in yellow, and exitpoints in blue.}
    \label{fig:8:exit&entrypoints}
\end{figure}

Let $M_n$ be the set of diagrams which satisfy the requirement that exitpoints are only connected to entrypoints, and let $\mathbb{M}_{n,1}$ be the (formal) complex span of $M_n$. This space $\mathbb{M}_{n,1}$ is a subalgebra of $\mathbb{B}_{n,1}$, indeed it is a special case of the Walled Brauer algebra. See \cite{coxwalled08}. Similar to the full algebra, let $M_n\langle k \rangle$ be the set of diagrams in $M_n$ with $k$ bars, and let $M_n^k$ be those with at least $k$ bars. 

Let's assume that the first street, $s_1$, is eastbound. Now let $V^{mat}_k$ be the random variable given by the smallest $i$ such that \begin{equation*}
    \underbrace{Z_{(mat, E)} Z_{(mat, W)} Z_{(mat, E)} \cdots}_{\text{i terms}} 
    =
    \begin{cases}
    \left( Z_{(mat, E)} Z_{(mat, W)} \right)^{\frac{i}{2}} 
    & i \ \mathrm{even} \\
    \left( Z_{(mat, E)} Z_{(mat, W)} \right)^{\frac{i-1}{2}}Z_{(mat, E)}
    & i \ \mathrm{odd}
    \end{cases} \ 
    \in M_n \langle k \rangle,
\end{equation*}
where the equality is included for clarity. In section \ref{section:results} we will write $Z_{mat}^{[i]}$ as shorthand for the product of $i$ terms on the left hand side. We are primarily interested in $V^{mat}_{\frac{n}{2}}$. \\

Now recall that our method is to condition on there being at most two obstacles per street. Let $U_{\le 2}$ be the event that there are at most two mirrors on a street (this is independent of which model we're considering). Let $X_{mir}$, $X_{mat}$ be the distributions $Z_{mir}$, $Z_{(mat, E/W)}$, conditioned on $U_{\le 2}$, respectively. (In the Manhattan case, it actually doesn't matter whether the street is eastbound or westbound). We write $X_{mir}, X_{mat}$ as elements of the Brauer algebra:
\begin{equation*}
    X_{mir} = \frac{(1-p)^{n-2}}{\mathbb{P}[U_{\le 2}]} 
        \left[
        \left( np(1-p) + (1-p)^2 \right)\cdot \mathrm{id} 
        +
        \frac{p^2}{2} \left( \sum_{1 \le i < j \le n} (ij) + (\overline{ij}) \right)
        \right]
\end{equation*}
and very similarly:
\begin{equation*}
    X_{mat} = \frac{(1-p)^{n-2}}{\mathbb{P}[U_{\le 2}]} 
        \left[
        \left( np(1-p) + (1-p)^2 \right)\cdot \mathrm{id} 
        +
        p^2 \left( \sum_{j-i \ \mathrm{even}} (ij) 
        + \sum_{j-i \ \mathrm{odd}} (\overline{ij}) \right)
        \right]
\end{equation*}
where we recall that the diagrams $(ij)$, $(\overline{ij})$, and $\mathrm{id}$ are given in Figure 5 ***. Note that $Z_{mir}$, $Z_{(mat, E/W)}$ can also be explicitly written down as elements of the Brauer algebra (for any $p$), they are just far more unwieldy. 

Similar to above, let $*$ denote $mir$ or $mat$, and define $W^*_k$ to be the random variable given by the smallest $i$ such that $(X_*)^i \in B_n \langle \frac{n}{2} \rangle$. In the next section, we give bounds on how large $W^*_{\frac{n}{2}}$ can be, and then we transfer these bounds to $V^*_{\frac{n}{2}}$.

\section{Results}\label{section:results}

Let us first prove part $b)$ of Theorem \ref{thm:1.1mainthm}. Let $*$ denote $mir$ or $mat$. Let $G$ be a geometric random variable with parameter $p^2$. We first show that $\mathbb{P}[V_{\frac{n}{2}}^* \le x] \le \mathbb{P}[G \le x]$, for all $x \ge 0$. 

Let us write, in the Manhattan case, $Z_{mat}^{[i]}$ as shorthand for the product containing $i$ factors $Z_{(mat, E)} Z_{(mat, W)} Z_{(mat, E)} \cdots$, and for consistency, let us write $Z_{mir}^{[i]} = Z_{mir}^i$. Assume that $Z_*^{[i]} \notin B_n \langle \frac{n}{2}\rangle$, that is, after $i$ streets, there are at least two remaining NS paths. Consider the probability $\mathbb{P}[Z_*^{[i+1]} \in B_n \langle \frac{n}{2} \rangle]$, that after the next street, no NS paths remain. In order for $Z_*^{[i+1]} \in B_n \langle \frac{n}{2} \rangle$ to hold, there certainly must be a mirror on $s_{i+1}$ reflecting each of the remaining NS paths - since there are at least two of these, the probability of this is at most $p^2$. Hence we can say that, given that $Z_*^{[i]} \notin B_n \langle \frac{n}{2}\rangle$,
\begin{equation*}
    \mathbb{P}[Z_*^{[i+1]} \in B_n \langle \frac{n}{2} \rangle] \le p^2.
\end{equation*}
Now we can easily couple the process with one which enters $B_n \langle \frac{n}{2} \rangle$ at each step with probability exactly $p^2$. The time taken for this process to enter $B_n \langle \frac{n}{2} \rangle$ can be described by $G$, and our claim  $\mathbb{P}[V_{\frac{n}{2}}^* \le x] \le \mathbb{P}[G \le x]$ follows. Now for $p \le \frac{1}{2}$,
\begin{equation*}
    \mathbb{P}[V_{\frac{n}{2}}^* \le \alpha p^{-2}] 
    \le 
    \mathbb{P}[G \le \alpha p^{-2}]
    = 
    1- (1-p^2)^{\alpha p^{-2}}
    \le 
    2 \alpha,
\end{equation*}
the last inequality following from both functions taking the value 0 at $\alpha=0$, and the differential of the first function being $(1-p^2)^{\alpha p^{-2}} \log ((1-p^2)^{p^{-2}})$, whose value is less than $2$ at $\alpha = 0$ and decreasing as $\alpha$ increases. This completes the proof of part $b)$.
\\

The rest of this section proves part $a)$ of Theorem \ref{thm:1.1mainthm}. We return to our simplified model, assuming at most two mirrors on each street. Observe that if the random diagram $X_*$ is multiplied with a diagram $b$ which has $k$ bars, the probability that the result has $k+1$ bars is \textit{independent of the chosen diagram b}. This is made precise in the following lemma.

\begin{lem}\label{lem:3:add1bar}
    \begin{enumerate}[label=\alph*)]
        \item Let $b \in B_{n}\langle k \rangle$, a diagram with $k$ bars. Then $bX_{mir} \in B_n\langle k \rangle \cup B_n\langle k+1 \rangle$, and 
            \begin{equation*}
                g_{n,p,k}^{mir} := \mathbb{P}[bX_{mir} \in B_n\langle k+1 \rangle] = \frac{1}{\mathbb{P}[U_{\le 2}]} \frac{p^2}{2} (1-p)^{n-2} {n-2k \choose 2}.
            \end{equation*}
        \item Let $b \in M_{n}\langle k \rangle$, a diagram with $k$ bars. Then $bX_{mat} \in M_n\langle k \rangle \cup M_n\langle k+1 \rangle$, and 
            \begin{equation*}
                g_{n,p,k}^{mat} := \mathbb{P}[bX_{mat} \in M_n\langle k+1 \rangle] = \frac{1}{\mathbb{P}[U_{\le 2}]} p^2 (1-p)^{n-2} (\frac{n}{2}-k)^2.
            \end{equation*}
    \end{enumerate}
    
\end{lem}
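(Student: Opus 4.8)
The plan is to reduce the whole statement to understanding the effect of right-multiplying a fixed diagram $b \in B_n\langle k \rangle$ by each of the three types of basis diagram that appear in $X_*$: the identity $\mathrm{id}$, a transposition $(ij)$, and a bar diagram $(\overline{ij})$. Since $X_*$ is a probability-weighted sum of such diagrams, once I know for each summand whether $b\cdot(\text{summand})$ has $k$ or $k+1$ bars, the distribution of the bar count of $bX_*$ — and in particular $g_{n,p,k}^*$ — is read off by collecting the coefficients of exactly those summands that raise the bar count.

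First I would dispose of the non-bar terms. Multiplying $b$ by $\mathrm{id}$ leaves $k$ bars. Multiplying by a transposition $(ij)$ — indeed any permutation — preserves the bar count: by \eqref{rmk:multdoesntremovebars} we have $b(ij) \in B_n^k$, and since $(ij)$ is its own inverse, applying the same remark to $b(ij)\cdot(ij) = b$ forces $b(ij) \in B_n\langle k \rangle$ exactly. Hence every bar-count increase must come from the bar diagrams $(\overline{ij})$.

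The heart of the argument is a local analysis of the concatenation $b\cdot(\overline{ij})$. By the multiplication convention, the southern bar of $(\overline{ij})$ glues the northern vertices $i^+, j^+$ of $b$ to each other, its northern bar contributes one fresh northern bar to the product, and all other strands pass through vertically. I would case-split on the roles of $i^+$ and $j^+$ in $b$: if both are endpoints of NS paths, gluing them fuses those two NS paths into a single southern bar, which together with the new northern bar raises the count to $k+1$; in every other case (one or both of $i^+, j^+$ lie on a northern bar of $b$, including the subcase where they are the two ends of the \emph{same} bar, producing a closed loop that is deleted since $q=1$) the bar destroyed in $b$ is exactly compensated by the new one from $(\overline{ij})$, leaving the count at $k$. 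This simultaneously proves $bX_* \in B_n\langle k\rangle \cup B_n\langle k+1\rangle$ and gives the clean criterion: $b(\overline{ij}) \in B_n\langle k+1\rangle$ if and only if $i^+$ and $j^+$ are both NS-path endpoints of $b$.

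Finally I would count. As $b$ has $k$ northern bars, exactly $n-2k$ of its northern vertices are NS-path endpoints. In the mirror case $X_{mir}$ weights each unordered pair by $\tfrac{p^2}{2}(1-p)^{n-2}/\mathbb{P}[U_{\le 2}]$, and the number of admissible pairs is $\binom{n-2k}{2}$, yielding $g_{n,p,k}^{mir}$. In the Manhattan case the extra ingredient is the walled/parity structure of $M_n$: every bar joins an odd to an even vertex, so among the $n-2k$ northern NS-endpoints there are exactly $\tfrac n2 - k$ of each parity; moreover the diagrams $(\overline{ij})$ occurring in $X_{mat}$ are precisely those with $j-i$ odd, forcing $i^+, j^+$ to have opposite parity, so the admissible pairs number $(\tfrac n2 - k)(\tfrac n2 - k) = (\tfrac n2 - k)^2$, each weighted by $p^2(1-p)^{n-2}/\mathbb{P}[U_{\le 2}]$, yielding $g_{n,p,k}^{mat}$. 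I expect the concatenation case analysis to be the only genuine subtlety; the remainder is bookkeeping, with the one model-specific point being the parity count that replaces $\binom{n-2k}{2}$ by $(\tfrac n2 - k)^2$ in the Manhattan case.
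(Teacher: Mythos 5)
Your proposal is correct and takes essentially the same route as the paper's proof: right-multiplication by $(ij)$ (or $\mathrm{id}$) preserves the bar count, $b(\overline{ij})$ gains a bar exactly when the two glued vertices of $b$ lie on NS paths, and the coefficient bookkeeping then gives $\binom{n-2k}{2}$ pairs in the mirror case and, by the parity/wall structure, $\left(\frac{n}{2}-k\right)^2$ pairs in the Manhattan case. Your write-up is in fact more detailed than the paper's (the explicit concatenation case analysis, the closed-loop case with $q=1$, and the inverse trick for transpositions are all left implicit there), but the underlying argument is identical.
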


\begin{proof}
    Let's do part $a)$ first. Let $b \in B_n\langle k \rangle$. It's clear that $b(ij) \in B_n\langle k \rangle$. Further, $b(\overline{ij}) \in B_n\langle k+1 \rangle$ iff the vertices $i^-$ and $j^-$ in $b$ lie on NS paths. There are ${n-2k \choose 2}$ such pairs. So, 
    \begin{equation*}
        \mathbb{P}[X_{mir}=(\overline{ij}), \ i,j \ \mathrm{on \ NS \ paths \ in} \ b]
        =
        \frac{p^2}{2} \frac{(1-p)^{n-2}}{\mathbb{P}[U_{\le 2}]} {n-2k \choose 2}
        =
        g_{n,p,k}^{mir}.
    \end{equation*}

    Part $b)$ follows very similarly. Let $b \in M_n\langle k \rangle$. Then $b(\overline{ij}) \in M_n\langle k+1 \rangle$ iff the vertices $i^-$ and $j^-$ in $b$ lie on NS paths. There are $(\frac{n}{2}-k)^2$ such pairs. So,
    \begin{equation*}
        \mathbb{P}[X_{mat}=(\overline{ij}), \ i,j \ \mathrm{on \ NS \ paths \ in} \ b]
        =
        \frac{p^2(1-p)^{n-2}}{\mathbb{P}[U_{\le 2}]} (\frac{n}{2}-k)^2
        =
        g_{n,p,k}^{mat}.
    \end{equation*}
\end{proof}

Let $*$ denote $mir$ or $mat$. Let $w_k^*$ = $W_{k+1}^* - W_{k}^*$; this is the number of streets you have to wait between the $k^{th}$ and the $k+1^{th}$ bar being added to the random diagram. Lemma \ref{lem:3:add1bar} shows that $w_k^*$ is a geometric random variable, with parameter $g_{n,p,k}^*$. Note that $W_{\frac{n}{2}}^* = \sum_{k=0}^{\frac{n}{2} -1} w_k^*$. The next theorem bounds the probability that $W_{\frac{n}{2}}^*$ is large.

\begin{thm}\label{thm:1:mainthmfor<2obs:mhattan}
    Let $*$ represent $mat$ or $mir$. Let $p\le Cn^{-1}$, $C$ a constant. Then for all $\alpha>0$, 
    \begin{equation*}
        \mathbb{P}[W^{*}_{\frac{n}{2}} \ge \alpha p^{-2}] 
        \le 
        A_{*}e^{-\frac{1}{4C_2} \alpha},
    \end{equation*}
    where $A_{mir} = cosh(\pi)$, and $A_{mat} = \frac{sinh(\pi)}{\pi}$, and $C_2 = \frac{1}{2}C^2 + C +1$.
\end{thm}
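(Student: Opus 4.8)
The plan is to treat $W^*_{\frac n2}=\sum_{k=0}^{\frac n2-1}w_k^*$ as a sum of \emph{independent} geometric variables and apply an exponential (Chernoff) tail bound. Independence is exactly what Lemma \ref{lem:3:add1bar} provides: the chance of gaining a bar at a given step depends only on the current number of bars $k$, not on the precise diagram, so the bar-count performs a pure-birth chain whose inter-arrival times $w_0^*,\dots,w_{\frac n2-1}^*$ are independent, with $w_k^*$ geometric of parameter $g_{n,p,k}^*$. Writing $\mathbb{E}[e^{sw_k^*}]=g_k e^{s}/(1-(1-g_k)e^{s})$ and substituting $u=1-e^{-s}$, this moment generating function collapses to the tidy $1/(1-u/g_k)$, valid whenever $u<\min_k g_k$. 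Markov's inequality then gives
\begin{equation*}
    \mathbb{P}\left[W^*_{\frac n2}\ge \alpha p^{-2}\right]\le (1-u)^{\alpha p^{-2}}\prod_{k=0}^{\frac n2-1}\frac{1}{1-u/g_k}.
\end{equation*}

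The heart of the argument is choosing $u$ so that this product collapses onto a convergent Euler product. I would factor out the common scale $\lambda:=p^2(1-p)^{n-2}/\mathbb{P}[U_{\le 2}]$, so that after the change of index $m=\frac n2-k$ (running $m=1,\dots,\frac n2$) Lemma \ref{lem:3:add1bar} reads $g_k^{mat}=\lambda m^2$ and $g_k^{mir}=\tfrac{\lambda}{2}m(2m-1)$. I would set $u=\lambda/2$ in the Manhattan case and $u=2\lambda/5$ in the mirror case; in each case the smallest parameter (the final bar, $m=1$) is where $u/g_k$ is largest, and this is precisely what pins down these constants. A term-by-term inequality of the shape $1/(1-x)\le 1+y$, equivalently $x\le y/(1+y)$, then yields $1/(1-u/g_k)\le 1+1/m^2$ for Manhattan and $1/(1-u/g_k)\le 1+4/(2m-1)^2$ for the mirror model, with equality exactly at $m=1$. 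Bounding the finite product by the infinite one and invoking the classical factorisations $\frac{\sinh\pi}{\pi}=\prod_{m\ge1}(1+1/m^2)$ and $\cosh\pi=\prod_{m\ge1}(1+4/(2m-1)^2)$ reproduces exactly the advertised constants $A_{mat}$ and $A_{mir}$.

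It remains to control the exponential prefactor. Using $(1-u)^{\alpha p^{-2}}\le e^{-u\alpha p^{-2}}$, the decay rate is a fixed multiple of $p^{-2}\lambda=(1-p)^{n-2}/\mathbb{P}[U_{\le 2}]$. Writing out $\mathbb{P}[U_{\le 2}]=(1-p)^n+np(1-p)^{n-1}+\binom n2 p^2(1-p)^{n-2}$, this ratio is simply $1/D$ with $D=(1-p)^2+np(1-p)+\binom n2 p^2$. The hypothesis $p\le Cn^{-1}$ gives $np\le C$, whence $D\le 1+C+\tfrac12 C^2=C_2$ term by term, so the decay rate is at least $\tfrac{1}{2C_2}$ (Manhattan) or $\tfrac{2}{5C_2}$ (mirror), both comfortably above the stated $\tfrac{1}{4C_2}$. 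Assembling the two factors gives the claim for both models.

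I expect the only genuinely delicate step to be the calibration of $u$: it must be small enough that $u<\min_k g_k$ (so every moment generating function is finite), yet large enough that the decay rate stays uniform in $n$, and at the same time positioned so that the product telescopes into the correct Euler product rather than some messier constant. Everything else—the Chernoff setup, the geometric moment generating function, and the crude but sufficient bound $D\le C_2$—is routine once the scale $\lambda$ and the substitution $m=\frac n2-k$ have been identified.
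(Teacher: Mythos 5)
Your proposal is correct, and at the structural level it is the same argument as the paper's: both use Lemma \ref{lem:3:add1bar} to write $W^*_{\frac{n}{2}}=\sum_{k=0}^{\frac{n}{2}-1}w_k^*$ as a sum of independent geometric variables, apply an exponential Chebyshev (Chernoff) bound, dominate the resulting product of moment generating functions by the Euler products $\prod_{m\ge1}(1+1/m^2)=\sinh(\pi)/\pi$ and $\prod_{m\ge1}(1+4/(2m-1)^2)=\cosh(\pi)$, and use the hypothesis $p\le Cn^{-1}$ only through $p^{-2}\lambda = 1/D \ge 1/C_2$, where $\lambda=p^2(1-p)^{n-2}/\mathbb{P}[U_{\le2}]$ is your common scale and $D=(1-p)^2+np(1-p)+\binom{n}{2}p^2$. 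The genuine difference is the calibration of the exponential tilt, and here your version is actually sounder than the paper's. The paper fixes a single tilt $t=\frac{p^2}{4C_2}$ for both models and linearizes $e^t\le 2t+1$, which in the mirror case yields the lower bound $1-(1-g^{mir}_{n,p,k})e^t\ge C_2^{-1}p^2\bigl(\tfrac12\binom{n-2k}{2}-\tfrac12\bigr)$; at the last bar $k=\frac{n}{2}-1$ this right-hand side vanishes (since $\binom{2}{2}=1$), so the corresponding factor $1+\frac{1/2}{\frac12\binom{n-2k}{2}-\frac12}$ in the paper's displayed product is a division by zero, and the subsequent comparison with $1+\frac{4}{2k(2k-1)}$ fails for that term (the proof is repairable, e.g.\ by taking a smaller $t$, but as written it degenerates exactly where the waiting time is longest). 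Your substitution $u=1-e^{-s}$ keeps each moment generating function exact, $1/(1-u/g_k)$, and your model-specific choices $u=\lambda/2$ and $u=2\lambda/5$ are tuned so that the worst factor, $m=1$, matches the first Euler factor ($1+1$ resp.\ $1+4$) with equality; every factor stays finite, no approximation of the exponential is needed, and the resulting decay rates $\frac{1}{2C_2}$ and $\frac{2}{5C_2}$ both clear the stated $\frac{1}{4C_2}$. Two details to spell out in a full write-up: (i) independence of the $w_k^*$, which, as you indicate, is exactly what the diagram-independence in Lemma \ref{lem:3:add1bar} provides; (ii) the validity condition $u<\min_k g_k$ for the moment generating functions, which your choices do satisfy since $\min_k g_k$ equals $\lambda$ (Manhattan) resp.\ $\lambda/2$ (mirror) — it is worth noting that your form $g_ke^{s}/(1-(1-g_k)e^{s})$ is the correct convention for waiting times $w_k^*\ge1$.
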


\begin{proof}[Proof of Theorem \ref{thm:1:mainthmfor<2obs:mhattan}]
Let's look at the Manhattan case. We first note that, using $p < Cn^{-1}$, 
\begin{equation*}
    \begin{split}
        g_{n,p,k}^{mat} 
        &= 
        \frac{(\frac{n}{2}-k)^2}{\frac{1}{2}n^2 - \frac{1}{2}n + np^{-1} -n + p^{-2} - 2p^{-1} + 1}\\
        &\ge
        \frac{(\frac{n}{2}-k)^2}{(\frac{1}{2}C^2 + C +1)p^{-2}} \\
        &= 
        (\frac{n}{2} - k)^2 C_2^{-1}p^2\\
    \end{split}
\end{equation*}
Now, recall that $W_{\frac{n}{2}}^{mat} = \sum_{k=0}^{\frac{n}{2} -1} w_k^{mat}$, that $w_k^{mat}$ are independent and geometrically distributed with parameter $g_{n,p,k}^{mat}$. Recall also that the moment generating function of a geometric random variable $G$ with parameter $\lambda$ is given by
\begin{equation*}
    \mathbb{E}[e^{tG}] = \frac{\lambda}{1-(1-\lambda)e^t},
\end{equation*}
for $t < -\log(1-\lambda)$. Setting $t = \frac{p^2}{4C_2}$ and $\lambda = p^2$, this inequality holds, since $C_2 = \frac{1}{2}C^2 + C +1 \ge \frac{1}{2}$. We have, using Chebyshev's exponential inequality,
\begin{equation*}
    \begin{split}
        \mathbb{P}[W_{\frac{n}{2}}^{mat} \ge \alpha p^{-2}]
        &\le 
        e^{- \frac{1}{4C_2} \alpha} \mathbb{E}\left[ e^{\frac{p^2}{4C_2} W_{\frac{n}{2}}^{mat}} \right]\\
        &=
        e^{- \frac{1}{4C_2} \alpha}
        \prod_{k=0}^{\frac{n}{2} -1} \left(
        \frac{g_{n,p,k}^{mat}}{1-(1- g_{n,p,k}^{mat})e^{\frac{p^2}{4C_2}}}
        \right)\\
        & = 
        e^{- \frac{1}{4C_2} \alpha}
        \prod_{k=0}^{\frac{n}{2} -1} \left( 1+ 
        \frac{e^{\frac{p^2}{4C_2}} -1 }{1-(1- g_{n,p,k}^{mat})e^{\frac{p^2}{4C_2}}}
        \right).
    \end{split}
\end{equation*}
Using $e^t \le 2t +1$, (which holds for $t = \frac{p^2}{4C_2} <1 $, which in turn always holds, since $C_2 \ge \frac{1}{2}$), we have
\begin{equation*}
    \begin{split}
        1-(1- g_{n,p,k}^{mat})e^{\frac{p^2}{4C_2}}
        & \ge
        1- (1- (\frac{n}{2} - k)^2 C_2^{-1}p^2) (\frac{1}{2} C_2^{-1}p^2 +1)\\
        & \ge
        C_{2}^{-1}p^2 ((\frac{n}{2} - k)^2 - \frac{1}{2}),
    \end{split}
\end{equation*}
which gives:
\begin{equation*}
    \begin{split}
        \mathbb{P}[W_{\frac{n}{2}}^{mat} \ge \alpha p^{-2}]
        &\le 
        e^{- \frac{1}{4C_2} \alpha}
        \prod_{k=0}^{\frac{n}{2} -1} \left( 1+ 
        \frac{ \frac{1}{2} C_2^{-1}p^2 }{ C_{2}^{-1}p^2 ((\frac{n}{2} - k)^2 - \frac{1}{2}) }
        \right)\\
        & =
        e^{- \frac{1}{4C_2} \alpha}
        \prod_{k=1}^{\frac{n}{2}} \left( 1+ 
        \frac{1}{ 2k^2 - 1 }
        \right)\\
        & \le 
        e^{- \frac{1}{4C_2} \alpha}
        \prod_{k=1}^{\frac{n}{2}} \left( 1+ 
        \frac{1}{ k^2 }
        \right)\\
        & \le 
        \frac{\sinh{\pi}}{\pi}
        e^{- \frac{1}{4C_2} \alpha}
    \end{split}
\end{equation*}
as desired. In the last inequality we used the product formula  $\sin{\pi z} = \pi z \prod_{\nu = 1}^{\infty} (1-\frac{z^2}{\nu^2})$, with $z=i$. 

The Mirror model case is almost identical; all the above working is the same except the expression $(\frac{n}{2} -k)^2$ is replaced with $\frac{1}{2}{n-2k \choose 2}$. This gives
\begin{equation*}
    \begin{split}
        \mathbb{P}[W_{\frac{n}{2}}^{mat} \ge \alpha p^{-2}]
        & \le 
        e^{- \frac{1}{4C_2} \alpha}
        \prod_{k=0}^{\frac{n}{2}-1 } \left( 1+ 
        \frac{ \frac{1}{2}}{ \frac{1}{2}{n-2k \choose 2} -  \frac{1}{2} }
        \right)\\
        & \le 
        e^{- \frac{1}{4C_2} \alpha}
        \prod_{k=1}^{\frac{n}{2} } \left( 1+ 
        \frac{ 4 }{ 2k(2k-1)}
        \right)\\
        &\le 
        \cosh(\pi) e^{- \frac{1}{4C_2} \alpha}
    \end{split}
\end{equation*}
as desired, where for the last equality we used the product formula $\cos(\pi z) = \prod_{\nu=1}^{\infty} (1- \frac{4z^2}{(2\nu -1)^2})$, with $z=i$. \\
\end{proof}

We can now compare the full models with the models assuming at most two mirror per street. Let $i \in \mathbb{N}$. Let $\tau(i)$ be the random variable given by the number of the first $i$ streets which have at most 2 mirrors. We see that $\tau(i)$ is binomially distributed with parameters $(i, \mathbb{P}[U_{\le 2}])$. Essentially what we would like to say is that if we omit each street which has more than 2 mirrors, we don't, in distribution, add any bars. 

This sounds like it should follow from the remark (\ref{rmk:multdoesntremovebars}), but it is more subtle. Let us illustrate why: certainly if $ab$ has $k$ bars, then we can conclude that each of $a$ and $b$ have no more than $k$ bars. However, if $abc$ has $k$ bars, it is very possible that $ac$ has more than $k$ bars. So, when removing factors from the middle of a product, there is more to be proved.

\begin{lem}\label{lem:5&6:comparingVkandWk}
    Let $*$ denote $mir$ or $mat$. Then $\mathbb{P}[V_k^* \le i] \ge \mathbb{P}[W_k^* \le \tau(i)]$.
\end{lem}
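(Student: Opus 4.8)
The plan is to reduce the statement to a comparison of two one-dimensional ``number of bars'' processes and to prove that comparison by a coupling, being careful that the obvious pointwise inequality is \emph{false}. Write $P_t = b_1\cdots b_t$ for the product of the diagrams of the first $t$ streets in the full model, let $\beta_t$ be the number of bars of $P_t$, and let $\gamma_t$ be the number of bars of the subproduct $Q_t$ obtained by keeping only the good streets (those with at most two mirrors) among the first $t$. By definition $\{V_k^*\le i\}=\{\beta_i\ge k\}$. By Lemma \ref{lem:3:add1bar}, conditioned on $\tau(i)=m$ the subproduct $Q_i$ is a product of $m$ i.i.d.\ copies of $X_*$ whose bar count is a Markov chain that moves up by one with probability $g^*_{n,p,\cdot}$ depending only on the current count; hence $\mathbb{P}[\gamma_i\ge k\mid\tau(i)=m]=\mathbb{P}[W^*_k\le m]$, and so $\mathbb{P}[\gamma_i\ge k]=\mathbb{P}[W^*_k\le\tau(i)]$. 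It therefore suffices to show $\mathbb{P}[\beta_i\ge k]\ge\mathbb{P}[\gamma_i\ge k]$. The naive route would be to prove $\beta_i\ge\gamma_i$ on the natural probability space, but this can fail: deleting an interior factor can raise the bar count (exactly the phenomenon flagged before the lemma), so $\gamma_i>\beta_i$ occurs for some configurations. I will instead compare the two \emph{marginal laws} through an artificial coupling.

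The coupling builds $P_t$ by appending one street on the right at each step, each street being good (law $X_*$) independently with probability $\mathbb{P}[U_{\le 2}]$ and bad otherwise, so that $\beta_i$ carries the true full-model law and $\mathbb{P}[\beta_i\ge k]=\mathbb{P}[V^*_k\le i]$. By remark (\ref{rmk:multdoesntremovebars}) the sequence $\beta_t$ is non-decreasing, and at a good street the support of $X_*$ (the identity, transpositions $(ij)$, and single bars $(\overline{ij})$) changes the bar count by $0$ or $1$, the increment being $\mathrm{Bernoulli}(g^*_{n,p,\beta_{t-1}})$ by Lemma \ref{lem:3:add1bar}, a law depending on $P_{t-1}$ only through $\beta_{t-1}$. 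On the same space I run a comparison chain $\gamma_t$: unchanged at bad streets, and at a good street increased by an indicator $I_\gamma$ which I set equal to the indicator $I_\beta$ that $\beta$ just increased whenever $\gamma_{t-1}=\beta_{t-1}$, and otherwise draw as an independent $\mathrm{Bernoulli}(g^*_{n,p,\gamma_{t-1}})$.

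Two checks then close the argument. First, the invariant $\beta_t\ge\gamma_t$ holds by induction: bad streets leave $\gamma$ fixed while $\beta$ cannot decrease; at a good street with $\gamma_{t-1}<\beta_{t-1}$ one has $\gamma_t\le\gamma_{t-1}+1\le\beta_{t-1}\le\beta_t$; and at a good street with $\gamma_{t-1}=\beta_{t-1}$ the two increments are set equal. Second, the comparison chain has the intended marginal law: when $\gamma_{t-1}=\beta_{t-1}$ both Bernoulli parameters equal $g^*_{n,p,\gamma_{t-1}}$, so $I_\gamma=I_\beta$ is a legitimate coupling, and in every case Lemma \ref{lem:3:add1bar} makes the conditional law of $I_\gamma$ given the past depend only on $\gamma_{t-1}$. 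Thus $\gamma$ is precisely the Markov chain run over the $\tau(i)$ good streets, giving $\mathbb{P}[\gamma_i\ge k]=\mathbb{P}[W^*_k\le\tau(i)]$; taking expectations of the invariant $\mathbf{1}[\beta_i\ge k]\ge\mathbf{1}[\gamma_i\ge k]$ at $t=i$ yields $\mathbb{P}[V^*_k\le i]\ge\mathbb{P}[W^*_k\le\tau(i)]$.

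The main obstacle is conceptual rather than computational: because the bar count is not monotone under deletion of interior factors, there is no realisation-by-realisation domination, and this is exactly why the lemma is not immediate from remark (\ref{rmk:multdoesntremovebars}). The device that rescues it is to never compare $\beta_i$ with the actual bar count of the deleted subproduct, but with an independently run chain sharing its marginal law, which is available only because Lemma \ref{lem:3:add1bar} makes each increment distribution depend on the current diagram solely through its number of bars. The one genuinely delicate point is verifying that forcing $I_\gamma=I_\beta$ at coinciding heights does not corrupt the marginal law of $\gamma$; this reduces again to the diagram-independence in Lemma \ref{lem:3:add1bar}. For the Manhattan case I will also note that $X_{mat}$ is identical for eastbound and westbound streets, so deleting bad streets (which disturbs the east/west alternation) leaves the law of the comparison chain unchanged and all products remain inside the subalgebra $\mathbb{M}_{n,1}$.
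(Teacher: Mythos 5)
Your proof is correct, and it reaches the lemma by a genuinely different route from the paper's. The paper also reduces everything to bar counts, but then proceeds by a double induction on $i$ and $k$: it conditions on whether the $i$-th street is good or bad, uses remark (\ref{rmk:multdoesntremovebars}) plus the inductive hypothesis for bad streets, and for good streets proves a sub-claim by splitting on whether the last factor adds a bar, invoking the diagram-independence of $g^{*}_{n,p,k}$ from Lemma \ref{lem:3:add1bar}. You instead prove the comparison of laws by an explicit coupling: an auxiliary copy of the simplified bar-count chain, glued to the full model's bar count $\beta$ whenever the two heights coincide and run from independent randomness otherwise, together with the pathwise invariant $\beta_t\ge\gamma_t$. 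The two arguments rest on exactly the same structural inputs --- monotonicity (\ref{rmk:multdoesntremovebars}) for bad streets and Lemma \ref{lem:3:add1bar} for good ones --- and both correctly refuse to compare $\beta_i$ with the actual bar count of the deleted subproduct, which, as you note, can exceed $\beta_i$ (this is precisely the subtlety the paper flags before stating the lemma). What your coupling buys: the probabilistic mechanism is transparent, the only induction is the one-step preservation of the invariant, and the single delicate point --- that forcing $I_\gamma=I_\beta$ at equal heights does not distort the marginal law of $\gamma$ --- is cleanly isolated and correctly resolved by the diagram-independence in Lemma \ref{lem:3:add1bar}. What the paper's induction buys: no auxiliary process or coupling-consistency check is needed, only manipulations of conditional probabilities; the price is heavier bookkeeping --- indeed the first display of the paper's sub-claim reads as an equality only if one interprets $B_n^{k-1}$ there as ``exactly $k-1$ bars'' and $g^{mir}_{n,p,k}$ as $g^{mir}_{n,p,k-1}$, the kind of index slip your increment-by-increment formulation avoids by construction.
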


Recall that $b \in B_n^k$ iff $b$ has at least $k$ bars, and $b \in M_n^k$ similar. Note that $V_k^{mir} \le i$ iff $Z_{mir}^{[i]} \in B_n^k$; similar for $V_k^{mat}$ and $W_k^*$. So Lemma \ref{lem:5&6:comparingVkandWk} can be rewritten as:
\begin{equation}\label{eqn:lem5&6rewritten}
    \mathbb{P}[Z_{mir}^{[i]} \in B_n^k] \ge \mathbb{P}[X_{mir}^{\tau(i)} \in B_n^k],
\end{equation}
and similar for Manhattan. 

We postpone the proof of Lemma \ref{lem:5&6:comparingVkandWk}, and first see how it is implemented, combining with Theorem \ref{thm:1:mainthmfor<2obs:mhattan} in proving part $a)$ of Theorem \ref{thm:1.1mainthm}.

\begin{proof}[Proof of part $a)$ of Theorem \ref{thm:1.1mainthm}]
     We approximate $Z_*^{[i]}$ with $X_*^{\tau(i)}$, that is, we approximate by ignoring streets which have more than two mirrors. Since the expected number of mirrors per street is at most $C$, we expect (at least for $C$ small) the proportion of streets with at most two mirrors to be large. Indeed:
    \begin{equation*}
        \begin{split}
            \lim_{n \to \infty} \mathbb{P}[U_{\le 2}] 
            &= 
            \lim_{n \to \infty} (1-p)^{n-2} \left( 
            (1-p)^2 + np(1-p) + {n \choose 2}p^2
            \right)\\
            & \ge
            \lim_{n \to \infty} (1-p)^{cp^{-1}-2}
            \left( (1-p)(1-p+C) + \frac{C}{2}(C-p) \right)
        \end{split}
    \end{equation*}
    the limit of which is $(\frac{1}{e})^C(1+C+\frac{C^2}{2}) =: C_3$. We can pick $n \in \mathbb{N}$ such that $\mathbb{P}[U_{\le 2}] > \frac{C_3}{2}$. Recalling $\tau(i)$ is binomially distributed with parameters $(i, \mathbb{P}[U_{\le 2}])$, by Hoeffding's inequality,
    \begin{equation}\label{eqn:hoeffding}
        \mathbb{P} \left[ \tau(i) \le \frac{C_3 i}{2} \right] 
        \le 
        \exp \left[ -2i \left( \mathbb{P}[U_{\le 2}] - \frac{C_3}{2} \right)^2 \right]
        \le 
        \exp \left[ -2i \left( \frac{C_3}{4} \right)^2 \right]
    \end{equation}
    for $p$ small enough.
    
    Let $i = \alpha p^{-2}$. Now using Lemma \ref{lem:5&6:comparingVkandWk},
    \begin{equation*}
        \begin{split}
            \mathbb{P}[V_k^* \ge \alpha p^{-2}] 
            &\le 
            \mathbb{P}[W_k^* \ge \tau(\alpha p^{-2})]\\
            &\le
            \mathbb{P} \left[ W_k^* \ge \frac{C_3}{2}\alpha p^{-2} \right]
            +
            \mathbb{P} \left[ \tau(i) \le \frac{1}{2}\alpha p^{-2} \right]\\
            &\le
            A_* \exp\left[ -\frac{C_3}{8 C_2}\alpha \right]
            + 
            \exp\left[ -2\alpha p^{-2} \left(\frac{C_3}{4} \right)^2 \right] \\
            &\le 
            2A_* \exp \left[ -\frac{1}{8e^C} \alpha \right]
        \end{split}
    \end{equation*}
    where the second to last inequality is from Theorem \ref{thm:1:mainthmfor<2obs:mhattan} and equation (\ref{eqn:hoeffding}), and the last is for $p$ small enough.
\end{proof}

It remains to prove Lemma \ref{lem:5&6:comparingVkandWk}. 

\begin{proof}[Proof of Lemma \ref{lem:5&6:comparingVkandWk}]
    We prove equation (\ref{eqn:lem5&6rewritten}); an identical proof holds for the Manhattan case. We work by induction on $i$ and $k$. The equation is trivially true for $k=0$, since both sides are equal to 1, and for $i=1$, since if $\tau(1)=1$, then $Z^{[1]}_{mir} = X^1_{mir}$, and otherwise, $X_{mir}^0 = \mathrm{id}$. 
    
    Assume the Lemma holds for $n-1$ and $k-1$. The left hand side of equation (\ref{eqn:lem5&6rewritten}) is:
    \begin{equation*}
        \begin{split}
            \mathbb{P}[Z_{mir}^{[i]} \in B_n^k] 
            &= 
            \mathbb{P} \left[ Z_{mir}^{[i-1]} Z_{mir}^{(i)} \in B_n^k \ | \ U_{>2}^{(i)} \right] \cdot \left( 1- \mathbb{P}[U_{\le2}] \right)\\
            &\hspace{1cm} + 
            \mathbb{P} \left[ Z_{mir}^{[i-1]} Z_{mir}^{(i)} \in B_n^k \ | \ U_{\le2}^{(i)} \right] \cdot \mathbb{P}[U_{\le2}]\\
            &\ge 
            \mathbb{P} \left[ Z_{mir}^{[i-1]} \in B_n^k \ | \ U_{>2}^{(i)} \right] 
            \cdot \left( 1- \mathbb{P}[U_{\le2}] \right)\\
            &\hspace{1cm} + 
            \mathbb{P} \left[ Z_{mir}^{[i-1]} X_{mir}^{(i)} \in B_n^k \ | \ U_{\le2}^{(i)} \right] \cdot \mathbb{P}[U_{\le2}]
        \end{split}
    \end{equation*}
    where we have noted that the number of bars in $Z_{mir}^{[i-1]} Z_{mir}^{(i)}$ cannot be less that in $Z_{mir}^{[i-1]}$, and that $Z_{mir}^{(i)}$ is equal to $X_{mir}$ when conditioned on $U_{\le2}^{(i)}$. Now the above is at least:
    \begin{equation*}
        \begin{split}
            &\ge 
            \mathbb{P} \left[ X_{mir}^{\tau(i-1)} \in B_n^k \ | \ U_{>2}^{(i)} \right] \cdot \left( 1- \mathbb{P}[U_{\le2}] \right)
            + 
            \mathbb{P}[ X_{mir}^{\tau(i)} \in B_n^k \ | \ U_{\le2}^{(i)} ] \cdot \mathbb{P}[U_{\le2}]\\
            &=
            \mathbb{P}[X_{mir}^{\tau(i)} \in B_n^k]
        \end{split}
    \end{equation*}
    where we used the inductive assumption, the fact that under $U_{> 2}^{(i)}$, $\tau(i) = \tau(i-1)$, and the following claim. The proof of the claim therefore concludes the whole proof.
    \begin{lem}
        We have that $\mathbb{P} \left[ Z_{mir}^{[i-1]} X_{mir}^{(i)} \in B_n^k \ | \ U_{\le2}^{(i)} \right]
        \ge 
        \mathbb{P}[ X_{mir}^{\tau(i)} \in B_n^k \ | \ U_{\le2}^{(i)} ]$
    \end{lem}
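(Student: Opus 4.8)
The plan is to first remove the conditioning, then reduce the whole inequality to a monotonicity-plus-stochastic-dominance argument driven by the fact that $X_{mir}$ adds at most one bar. Starting with the right-hand side under the event $U_{\le2}^{(i)}$: since the $i^{th}$ street is independent of the first $i-1$, conditioning on $U_{\le2}^{(i)}$ leaves the joint law of $Z_{mir}^{[i-1]}$, of $X_{mir}^{\tau(i-1)}$, and of $\tau(i-1)$ unchanged, while forcing $\tau(i)=\tau(i-1)+1$ and making the street-$i$ factor distributed exactly as $X_{mir}$. Writing $X_{mir}^{(i)}$ for this last factor, which is common to both sides, the claim becomes the unconditioned inequality
\[
    \mathbb{P}\big[Z_{mir}^{[i-1]} X_{mir}^{(i)} \in B_n^k\big]
    \ge
    \mathbb{P}\big[X_{mir}^{\tau(i-1)} X_{mir}^{(i)} \in B_n^k\big],
\]
where $X_{mir}^{(i)}$ is an independent copy of $X_{mir}$ on both sides.

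The engine of the argument is the observation, already contained in Lemma \ref{lem:3:add1bar} together with the remark (\ref{rmk:multdoesntremovebars}), that for a \emph{fixed} diagram $b$ with $\ell$ bars the quantity $h(\ell) := \mathbb{P}[bX_{mir}\in B_n^k]$ depends only on $\ell$ and not on the particular $b$. Indeed multiplication never removes bars, and $X_{mir}$ adds at most one, with the probability of adding one being $g_{n,p,\ell}^{mir}$; hence $h(\ell)=1$ for $\ell\ge k$, $h(k-1)=g_{n,p,k-1}^{mir}$, and $h(\ell)=0$ for $\ell\le k-2$. In particular $h$ is non-decreasing in $\ell$.

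Then I would condition each side on the number of bars $L$ of $Z_{mir}^{[i-1]}$ (respectively $L'$ of $X_{mir}^{\tau(i-1)}$). By independence of the last factor, each side equals $\mathbb{E}[h(L)]$ (respectively $\mathbb{E}[h(L')]$). The inductive hypothesis, namely equation (\ref{eqn:lem5&6rewritten}) at $i-1$, which states that $\mathbb{P}[Z_{mir}^{[i-1]}\in B_n^\ell]\ge \mathbb{P}[X_{mir}^{\tau(i-1)}\in B_n^\ell]$ for every threshold $\ell$, is exactly the assertion that $L$ stochastically dominates $L'$. Since $h$ is non-decreasing, Abel summation gives
\[
    \mathbb{E}[h(L)]-\mathbb{E}[h(L')]
    =
    \sum_{m\ge 1}\big(h(m)-h(m-1)\big)\big(\mathbb{P}[L\ge m]-\mathbb{P}[L'\ge m]\big)\ge 0,
\]
because each increment $h(m)-h(m-1)\ge 0$ and each tail difference is non-negative; only the terms $m=k-1$ and $m=k$ actually contribute, so it is precisely the inductive hypothesis at thresholds $k-1$ and $k$ that is invoked.

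The step I expect to be most delicate is the conditioning bookkeeping of the first paragraph: making precise that under $U_{\le2}^{(i)}$ the two sides genuinely share the same independent final factor and that $\tau(i)=\tau(i-1)+1$, so that the common factor effectively cancels and the problem really reduces to comparing the bar-count distributions of $Z_{mir}^{[i-1]}$ and $X_{mir}^{\tau(i-1)}$. Once that reduction is secured, the monotonicity of $h$ together with the inductive stochastic dominance closes the argument. The Manhattan case is identical, using $M_n$, $M_n^k$ and Lemma \ref{lem:3:add1bar}(b) in place of their mirror counterparts.
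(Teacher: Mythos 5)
Your proposal is correct and follows essentially the same route as the paper: the paper's proof splits $\mathbb{P}\left[ Z_{mir}^{[i-1]} X_{mir}^{(i)} \in B_n^k \ | \ U_{\le2}^{(i)} \right]$ according to whether the final factor adds a bar, uses Lemma \ref{lem:3:add1bar} (the bar-adding probability depends only on the bar count) together with remark (\ref{rmk:multdoesntremovebars}), and then applies the inductive hypothesis at the two thresholds $k-1$ and $k$ before absorbing the last factor via $\tau(i)=\tau(i-1)+1$ --- exactly the two nonzero increments in your Abel summation. Your packaging via the monotone function $h$ and stochastic dominance is just a cleaner (indeed slightly more careful) rendering of that same two-term computation, so there is nothing substantively different to flag.
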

    \begin{proof}\renewcommand{\qedsymbol}{}
        To prove the claim, we split the left hand term based on whether or not $X_{mir}^{(i)}$ adds a bar to $Z_{mir}^{[i-1]}$:
        \begin{equation*}
            \begin{split}
                \mathrm{LHS} 
                &=
                \mathbb{P} \left[ Z_{mir}^{[i-1]} \in B_n^{k-1} \ | \ U_{\le2}^{(i)} \right]\cdot g_{n,p,k}^{mir} 
                +
                \mathbb{P} \left[ Z_{mir}^{[i-1]} \in B_n^{k} \ | \ U_{\le2}^{(i)} \right]\\
                &\ge 
                \mathbb{P} \left[ X_{mir}^{\tau(i-1)} \in B_n^{k-1} \ | \ U_{\le2}^{(i)} \right]\cdot g_{n,p,k}^{mir} 
                +
                \mathbb{P} \left[ X_{mir}^{\tau(i-1)} \in B_n^{k} \ | \ U_{\le2}^{(i)} \right]\\
                &=
                \mathbb{P} \left[ X_{mir}^{\tau(i-1)} X_{mir}^{(i)} \in B_n^k \ | \ U_{\le2}^{(i)} \right]
            \end{split}
        \end{equation*}
        and now recalling that under $U_{\le 2}$, $\tau(i) = \tau(i-1) +1$, the result follows. This concludes the proof of Lemma \ref{lem:5&6:comparingVkandWk} and part $a)$ of Theorem \ref{thm:1.1mainthm}.
    \end{proof}

\end{proof}

\section{Acknowledgements}
I would like to thank Sasha Sodin for many useful discussions.

\newpage

\bibliography{bibx}
\bibliographystyle{plain}
\end{document}